\numberwithin{equation}{section}
\theoremstyle{plain}
\newtheorem{theorem}{Theorem}[section]
\newtheorem*{theorem*}{Theorem}
\newtheorem{lemma}[theorem]{Lemma}
\newtheorem{corollary}[theorem]{Corollary}
\theoremstyle{definition}
\newtheorem{definition}[theorem]{Definition}
\theoremstyle{remark}
\newtheorem{remark}[theorem]{Remark}
\newtheorem{case[theorem]}{Case}
\newcommand{\Slim}[1]{\sum\limits_{#1}}
\newcommand{\Abs}[1]{\left\lvert{#1}\right\rvert}
\date{\today}
\journal{Journal of Number Theory}
\begin{document}
\begin{frontmatter}
    \title{The Square-Root Law does Not Hold in the Presence of Zero Divisors}

    \author[me]{Nathaniel Kingsbury-Neuschotz}
    \ead{nkingsbury@gradcenter.cuny.edu}
    \affiliation[me]{organization={The CUNY Graduate Center, Mathematics},
    addressline={365 5th Ave.},
    city={New York},
    postcode={10016},
    state={NY},
    country={USA}}

    \begin{abstract}
Let $R$ be a finite ring (with identity, not necessarily commutative) and define the paraboloid $P = \{(x_1, \dots, x_d)\in R^d|x_d = x_1^2 + \dots + x_{d-1}^2\}.$ Suppose that for a sequence of finite rings of size tending to infinity, the Fourier transform of $P$ satisfies a square-root law of the form $|\hat{P}(\psi)|\leq C|R|^{-d}|P|^\frac{1}{2}$ for all nontrivial additive characters $\psi$, with $C$ some fixed constant (for instance, if $R$ is a finite field, this bound will be satisfied with $C = 1$). Then all but finitely many of the rings are fields.

Most of our argument works in greater generality: let $f$ be a polynomial with integer coefficients in $d-1$ variables, with a fixed order of variable multiplications (so that it defines a function $R^{d-1}\rightarrow R$ even when $R$ is noncommutative), and set  $V_f = \{(x_1, \dots, x_d)\in R^d|x_d = f(x_1, \dots, x_{d-1})\}$. If (for a sequence of finite rings of size tending to infinity) we have a square root law for the Fourier transform of $V_f$, then all but finitely many of the rings are fields or matrix rings of small dimension. We also describe how our techniques can establish that certain varieties do not satisfy a square root law even over finite fields.
\end{abstract}

\begin{keyword}
    character sums \sep noncommutative rings \sep ideals \sep geometric combinatorics \sep vector spaces over finite fields \sep the square-root law \sep algebraic varieties
\end{keyword}

\end{frontmatter}
 
\section{Introduction}
Exponential sums over varieties are a classical object in algebraic geometry and analytic number theory: given an affine variety $V\subseteq \mathbb{F}_q^d$, and a nontrivial additive character $\psi$ of $\mathbb{F}_q^d$, one forms the sum

$$S(V, \psi) = \sum\limits_{\vec{x}\in V}\psi(\vec{x}).$$

By Plancherel's theorem, the best level of cancellation one can hope to obtain  (uniformly over all nontrivial characters $\psi$) is
$$|S(V, \psi)|\leq C|V|^\frac{1}{2}$$

(see \cite{IoMPak}, Proposition 2.10 for a manifestation of this, or our Lemma \ref{C-bound}). After Deligne's work on the Weil conjectures (\cite{Deligne1} and \cite{Deligne2}), one may associate a sheaf to a given individual exponential sum such that the level of cancellation of the sum is controlled in terms of the ($\ell$-adic) cohomology and weight of that sheaf; in many cases one is able to obtain square-root cancellation, for instance for generalized Kloosterman sums (\cite{KiehlWeissauer} theorem V.2.1, drawing on \cite{DeligneSGA}), sums over deformations of nonsingular projective hypersurfaces (\cite{Deligne1}), and even some ``singular'' sums (\cite{Katz}). In fact, by a result of Adolphson and Sperber (\cite{AdolphSperb}) for a ``generically chosen'' polynomial in $d-1$ variables $f(X_1,\dots, X_{d-1})$, the associated exponential sum

$$S(V_f, \psi) = \sum\limits_{\vec{x}\in V_f}\psi(\vec{x})$$

satisfies the bound

$$|S(V_f, \psi)| \leq \nu(f)|V_f|^{1/2}$$

where $V_f$ is the graph of $f$ in $\mathbb{F}_q^d$ and $\nu(f)$ is a constant determined by the multidegrees of the terms of $f$ for \textit{any} nontrivial character $\psi$ of $\mathbb{F}_q^d;$ see subsection \ref{finiteFieldsStory} for details. In particular, the Fourier transform of the indicator function of this variety satisfies a square root power saving bound uniformly away from the trivial character, and thus has optimal Fourier decay.\footnote{Sets whose Fourier transforms have optimal decay are known as Salem sets (see subsection \ref{notationPrelim} for a precise definition). The term ``Salem sets'' has a long history in analysis. The term was originally used in the continuous setting for sets whose Fourier decay is optimal given their Hausdorff dimension (see for instance \cite{Bluhm} and \cite{Mattila}); the definition used in this paper is essentially that of Iosevich and Rudnev \cite{IoRud}, which has become standard in the discrete setting (as in \cite{Chen}, \cite{Fraser}, \cite{IoMorgPak}, \cite{IoMPak}, and \cite{KohShen}). In applications in analysis and geometric combinatorics, the presence of a uniform bound over \textit{all} nontrivial characters is extremely useful, as it makes arguments such as our proof of \ref{densityArg} possible. See also \cite{IoMorgPak}, \cite{IoRud}, and \cite{KohShen} for arguments of a similar flavor.}

All of these results are in line with the square root law, a useful heuristic which roughly says we should expect the following type of bound to hold:
$$\left|\sum\text{oscillating terms of modulus 1}\right|\leq C\sqrt{\text{\# of terms}}.$$
The square root law has many more manifestations throughout mathematics. For example, on a somewhat parallel track to Adolphson and Sperber's result, a generic subset (not necessarily defined by polynomials) of $(\mathbb{Z}/N\mathbb{Z})^d$ will have near-square root cancellation --- more precisely, a random subset $S$ of $(\mathbb{Z}/N\mathbb{Z})^d$ will satisfy a square-root bound up to a logarithmic factor (\cite{Babai} Theorem 5.14). For a survey of the square-root law's significance within number theory; the interested reader should see Mazur's wonderful article \cite{Mazur}.

Bounding exponential sums is typically at the heart of analysing additive configurations in sets, such as Roth's theorem (see \cite{Roth1} and \cite{Roth2}), or the finite-field Falconer distance set problem (\cite{FalconerConj}, \cite{IoRud}, \cite{KohShen}, and the references enclosed therein), and the pinned distances problem (\cite{IoMorgPak}). In particular, when a set $V$ is a Salem set, it is guaranteed to intersect the difference set of any sufficiently large set (see for instance our theorem \ref{densityArg}). Inspired by recent trends in additive combinatorics, we investigate character sums over general finite rings (with unit, not necessarily commutative), in the model case where the variety $V$ of interest is the graph of a polynomial function, and especially in the case of the parabaloid, and show that for finite rings that are not fields, one cannot get square-root cancellation. In particular, we prove the following theorem:
\begin{theorem*}
    Let $f(X_1,\dots, X_{d-1})$ be a polynomial in $\mathbb{Z}[X_1,\dots, X_{d-1}].$ Let $V_f(R)$ denote the solution set to $X_d = f(X_1,\dots, X_{d-1})$ over $R$. Suppose a sequence of finite rings  $\{R_i\}_{i = 1}^\infty$ has the property that exponential sums over $V(R_i)$ satisfy square root cancellation (for some constant). Then all but finitely many of the rings are fields or matrix rings of small dimension relative to $d$. In the case where $f(X_1,\dots, X_{d-1}) = X_1^2 + \dots + X_{d-1}^2,$ all but finitely many of the rings are fields.
\end{theorem*}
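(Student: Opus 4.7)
The plan is to prove the contrapositive by Fourier analysis, guided by the Artin--Wedderburn structure of finite rings. A finite ring $R$ has a nilpotent Jacobson radical $J(R)$, and the semisimple quotient $R/J(R)$ decomposes as a finite product $\prod_j M_{n_j}(\mathbb{F}_{q_j})$. I aim to show that the square-root hypothesis forces, for all but finitely many $R_i$, both $J(R_i)=0$ and the Artin--Wedderburn decomposition to collapse to a single matrix factor $M_n(\mathbb{F}_q)$ with $n$ bounded in terms of $d$.

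The central device is an \emph{ideal amplification identity}. Let $I\subseteq R$ be a proper two-sided ideal and $\chi$ an additive character of $R$ trivial on $I$. Because $f$ has integer coefficients and $I$ is an ideal, $f(x_1,\ldots,x_{d-1})\bmod I$ depends only on the residues $\bar x_i\in R/I$, so every fibre of the reduction $R^{d-1}\to(R/I)^{d-1}$ contributes the same phase in the exponential sum. This yields the factorization
\[
\widehat{1_{V_f(R)}}(\xi)=|I|^{d-1}\,\widehat{1_{V_{\bar f}(R/I)}}(\bar\xi)
\]
for any $\xi$ inducing such a $\chi$. Applying Lemma~\ref{C-bound} (the Plancherel lower bound) to produce some $\bar\xi$ with $|\widehat{1_{V_{\bar f}(R/I)}}(\bar\xi)|\gtrsim|R/I|^{(d-1)/2}$ gives a Fourier coefficient on $V_f(R)$ of order $|I|^{d-1}|R/I|^{(d-1)/2}$, which exceeds the allowed $C|R|^{(d-1)/2}=C|I|^{(d-1)/2}|R/I|^{(d-1)/2}$ by a factor of $|I|^{(d-1)/2}/C$. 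Whenever $|I|^{(d-1)/2}>C$, this violates the square-root law.

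This identity handles the Jacobson radical and the semisimple-product cases. Applied with $I=J(R_i)$, the amplification bounds $|J(R_i)|$ uniformly by a function of $C$ and $d$; combined with a filtration refinement along $J(R_i)\supset J(R_i)^2\supset\cdots\supset 0$ and amplification at each level, this ultimately forces $J(R_i)=0$ for all but finitely many $i$. In the semisimple case $R_i\cong\prod_j M_{n_j}(\mathbb{F}_{q_j})$, every subset of Artin--Wedderburn factors is a two-sided ideal: the amplification applied with $I$ a single factor bounds each factor's size, and applied with $I$ the complement of a factor bounds the complementary product. Together these force $|R_i|$ to be bounded whenever $R_i$ has two or more factors, so $|R_i|\to\infty$ implies that eventually $R_i$ is simple, i.e., a single matrix ring $M_n(\mathbb{F}_q)$.

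The main obstacle is the \emph{simple matrix ring} case: $M_n(\mathbb{F}_q)$ admits no proper two-sided ideals, so the amplification identity places no direct restriction on $n$. To control $n$ relative to $d$, the plan is to replace two-sided ideals by \emph{one-sided} structures. A rank-$r$ idempotent $e\in M_n(\mathbb{F}_q)$ generates a right ideal $eM_n(\mathbb{F}_q)$ of size $q^{rn}$; restricting $f$ to the subvariety of matrices of rank at most $r$ gives a sub-configuration on which $f$ factors through $M_r(\mathbb{F}_q)$ via conjugation. The trace pairing then identifies characters adapted to this one-sided structure, and a Plancherel argument on the reduced matrix ring produces an amplified Fourier coefficient when $n$ is large compared to $d$ and $\deg f$. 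The hard part will be extracting the sharp quantitative threshold: one-sided ideals do not enter the Fourier transform as cleanly as two-sided ones, and the combinatorics of the rank filtration must be matched precisely against the allowed square-root constant to force the contradiction.
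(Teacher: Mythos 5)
Your ``ideal amplification identity'' is exactly the paper's Lemma~\ref{JacobsonBound}, and your use of it for the Jacobson radical and for peeling off Artin--Wedderburn factors runs parallel to the paper's Lemmas~\ref{productsAreNice},~\ref{C-bound}, and~\ref{JacobsonBound}. So far this part of your plan is sound and is, in spirit, the same route the paper takes. (One small discrepancy: the paper does not descend through the filtration $J\supset J^2\supset\cdots$; after bounding $|J|$ it instead invokes Nakayama's lemma to get $J/J^2\neq 0$, observes $J/J^2$ is an $R/J=M_n(F)$-module and hence an $F$-vector space, and concludes $|F|$ divides $|J|$, bounding $|F|$. That detail does not affect the correctness of your plan.)

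The genuine gap is in the \emph{simple matrix ring} case, which is exactly the case you yourself flag as ``the hard part.'' You correctly observe that there is no proper two-sided ideal in $M_n(F)$, so the amplification identity is unavailable, and then propose a rank-filtration/idempotent scheme that does not actually work: a right ideal $eM_n(F)$ is not the kernel of any ring homomorphism, so $f$ does not descend through it, and ``factors through $M_r(F)$ via conjugation'' has no precise content that gives a Fourier coefficient on $V_f$. You have in effect missed the paper's other technique, which is the one that handles this case cleanly. The paper does not bound one-sided ideals by any Fourier factorization at all; it uses a difference-set / positive-density argument (Theorem~\ref{densityArg}). For a $C$-Salem $V$, any $E\subseteq R^d$ with $|E|> Cq^d/|V|^{1/2}$ must contain a pair $x,y$ with $x-y\in V$. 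Taking $V=V_{f,1}$ (the graph of $f+1$; here $f$ has zero constant term and the translation-invariance of Lemma~\ref{allParabolasAreTheSame} is used) and $E=I^d$ for a proper one-sided ideal $I$, any such pair would give $f(x_1-y_1,\dots,x_{d-1}-y_{d-1})-(x_d-y_d)=1$, while the left side lies in $I$ because $f$ has integer coefficients and zero constant term and $I$ is a one-sided ideal. That is impossible, so $|I|^d\le Cq^{(d+1)/2}$, i.e.\ $|I|\le C^{1/d}q^{1/2+1/(2d)}$. Applied to the column ideal of $M_n(F)$, of size $|F|^{n^2-n}$, this yields $|F|^{\frac{n^2}{2}-\frac{n^2}{2d}-n}\le C^{1/d}$, which bounds $n$ (and, for $n$ outside the ``small dimension'' range, bounds $|F|$). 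This is what produces the ``matrix rings of small dimension relative to $d$'' conclusion; your proposal has no correct substitute for this step.
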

\begin{remark}
    When we work over noncommutative rings, polynomials are really sums of words, since the order of multiplications matters. When the coefficients are integers, they can all be written on the left. (\textit{Proof}: the center of a ring is closed under addition, and contains the multiplicative identity.)
\end{remark}
\begin{remark}
    We expect that the stronger result we have in the special case of the paraboloid $X_d = X_1^2 + \dots + X_{d-1}^2$ to hold much more generally; however, with our current techniques we are only able to establish our stronger result by making a direct computation of the Fourier transform of $V_f(R)$ where $R$ is a matrix ring of small dimension relative to $d$. One could in principle carry out such a computation for any variety of interest.
\end{remark}

A theorem of this type was proven by Iosevich, Murphy, and Pakianathan  \cite{IoMPak}, partly motivated by earlier work in \cite{IoRud} and \cite{CovIoPak} studying geometric configurations in modules over $\mathbb{Z}/p\mathbb{Z}$ and $\mathbb{Z}/p^\ell\mathbb{Z},$ respectively. This paper generally follows their methods, but works in much greater generality, as they only consider the solution set to the equation $xy = 1.$

Any finite ring that is not a finite field will necessarily have zerodivisors, as a finite ring with no zerodivisors is necessarily a division ring, by the pigeonhole principle, and any finite division ring is a field, by Wedderburn's theorem. The role of zerodivisors as a ``degeneracy'' of a ring has previously appeared in a paper of Tao investigating the sum-product phenomenon in general rings \cite{Tao}.

The case where the finite ring is of the form $R = \mathbb{Z}/p^m\mathbb{Z}$ is reasonably well understood, and indeed inspired \cite{IoMPak} by way of its appearance in \cite{CovIoPak}. For example, let $p$ be an odd prime, $(n, p) = 1$, and suppose $2\leq m \leq n$. Let $k = (n,p-1).$ Consider the exponential sum

$$S = \sum\limits_{x\in\mathbb{Z}/p^m\mathbb{Z}}\text{exp}\left(\frac{2\pi ix^n}{p^m}\right).$$

We may compute:
\[
\begin{split}
    S &= \sum\limits_{\substack{x\in\mathbb{Z}/p^m\mathbb{Z}\\ p|x}}\text{exp}\left(\frac{2\pi ix^n}{p^m}\right) + \sum\limits_{x\in(\mathbb{Z}/p^m\mathbb{Z})^\times}\text{exp}\left(\frac{2\pi ix^n}{p^m}\right)\\
    &= \sum\limits_{\substack{x\in\mathbb{Z}/p^m\mathbb{Z}\\ p|x}}1 + k\sum\limits_{x\in(\mathbb{Z}/p^m\mathbb{Z})^\times}\mathds{1}_{\text{n}^\text{th}\text{powers}}(x)\text{exp}\left(\frac{2\pi i x}{p^m}\right)\\
    &= p^{m-1} + \sum\limits_{x\in(\mathbb{Z}/p^m\mathbb{Z})^\times}\sum\limits_\chi \chi(x)\text{exp}\left(\frac{2\pi i x}{p^m}\right)
\end{split}
\]
where the inner summation on the last line is over all Dirichlet characters $\chi$ of order dividing $k$. These Dirichlet characters will all be induced by the Dirichlet characters of $\mathbb{Z}/p\mathbb{Z}$ of order dividing $k$, so that we have for such a character
$$\sum\limits_{x\in(\mathbb{Z}/p^m\mathbb{Z})\times}\chi(x)\text{exp}\left(\frac{2\pi i x}{p^m}\right) = \sum\limits_{x=1}^{p-1}\chi(x)\text{exp}\left(\frac{2\pi ix}{p^m}\right)\sum_{\ell = 0}^{p^{m-1}-1}\text{exp}\left(\frac{2\pi i \ell p}{p^m}\right) = 0.
$$
As a result, we simply have $S = p^{m-1},$ far worse than square root cancellation; often, multivariate exponential sums reduce to single variable sums such as the above, and would have the same decay. Our setup differs in two key ways: first, we fix the equation and vary the ring, whereas in the above the rings under consideration were constrained so $m\leq n$. In the single variable case, one may have square-root bounds over rings such as $\mathbb{Z}/p^m\mathbb{Z}$ when $m$ is large; for the quadratic Gauss sum this appears in \cite{Apostol}, chapter 8, exercise 16. Second, and importantly, we consider general finite rings, possibly noncommutative, rather than just rings of the form $\mathbb{Z}/p^m\mathbb{Z}$ (or indeed, rings of the form $\mathbb{Z}/N\mathbb{Z}$).

The key ingredient in our argument (as in the argument of \cite{IoMPak}) is a bound on the size of proper ideals of a ring whose associated $V_f(R)$ has a Fourier transform with square root decay (see Theorem \ref{idealBound}). In some circumstances, this technique even gives results over finite fields; see remark \ref{generalizedHyperbolas} for details.

\section{Proof of the Main Result}
\subsection{Notation and Preliminaries}\label{notationPrelim}
Throughout this paper, $R$ will be a finite ring (unital, with $1\neq 0$, not necessarily commutative) and $q = |R|.$  The \textit{Fourier Transform} on $R^d$ is defined as follows: given a function $f:R^d\rightarrow\mathbb{C}$, if $\psi$ is a character of the underlying additive group of $R^d,$ 

$$\widehat{f}(\psi) = \frac{1}{|R|^d}\Slim{x\in R^d}f(x)\psi(x).$$
With this definition, we have the Fourier inversion formula

$$f(x) = \Slim{\psi\in (R^d)^{\vee}}\widehat{f}(\psi)\psi(x).$$

Typically, $E$ will denote a subset of $R^d$; by abuse of notation, we will write $E$ both for the set $E$ and for its characteristic function, and similarly for sets denoted by other symbols.

A set $S\subseteq R^d$ is a $C$-Salem set if $|\widehat{S}(\psi)| \leq C\cdot |R|^{-d}\cdot |S|^\frac{1}{2}$ for all nontrivial $\psi$ in the dual group, where $\widehat{S}$ is the Fourier transform of the indicator function of $S$. 

\subsection{Square-Root Cancellation over Finite Fields}\label{finiteFieldsStory}
Over a finite field, the graphs of ``most'' polynomial functions are $C$-Salem, where the constant $C$ depends only on the degree of the polynomial. This fact essentially follows from the main result of \cite{AdolphSperb}, in which Adolphson and Sperber use a combination of Dwork's $p$-adic cohomology and an important resolution discovered by Kouchnirenko in his work on Newton polyhedra and Milnor numbers to control the shape of the $L$-function of an exponential sum associated to a polynomial and to bound the $p$-adic norms of its roots, which then allows them to control the dimensions of the $\ell$-adic cohomology groups of the associated sheaf and through an induction its weight. Before stating their key result, we recall some definitions from \cite{AdolphSperb}, which will be stated in the generality of Laurent polynomials, though we only need the special case of genuine polynomials.

\begin{definition}
Let $k$ be a field and let $f\in k[X_1, X_2, \dots, X_d, (X_1X_2\cdots X_d)^{-1}]$ be a Laurent polynomial. The \textbf{Newton polyhedron} of $f$, denoted $\Delta(f)$, is the convex hull in $\mathbb{R}^n$ of the origin $(0, 0, \dots, 0)$ and the set of multidegrees of the monomials appearing in $f$.
\end{definition}

\begin{definition}
Let $\Delta$ be a convex polyhedron in $\mathbb{R}^d$. A \textbf{face} of $\Delta$ is a set of the form

$$\{\vec{x} \in \mathbb{R}^d: \nu(\vec{x}) = \min_{\vec{y}\in \Delta}\nu(\vec(y)) \}.$$
\end{definition}

This generalizes the notion of edges and vertices of a polygon, or faces, vertices, and edges of a 3 dimensional polyhedron. No matter the dimension, a convex polyhedron has only finitely many faces. Codimension one faces are sometimes known as ``facets,'' though we will not need this term here. 

Given a Laurent polynomial $f$ with Newton polyhedron $\Delta(f)$, for any face $\sigma$ of $\Delta(f)$ we define $f_\sigma$ to be the sum of the terms of $f$ whose multidegrees lie within $\sigma$. That is to say, if we have

$$ f = \sum\limits_{\vec{j}\in \Delta(f) \cap \mathbb{Z}^d}a_{\vec{j}}\vec{X}^{\vec{j}},$$

then

$$f_\sigma = \sum\limits_{\vec{j}\in \sigma \cap \mathbb{Z}^d}a_{\vec{j}}\vec{X}^{\vec{j}},$$

where the notation $\vec{X}^{\vec{j}}$ denotes $X_1^{j_1}X_2^{j_2}X_3^{j_3}\cdots X_d^{j_d}.$ With this notation fixed, we can state the key nondegeneracy condition in \cite{AdolphSperb}.

\begin{definition}
Let $f$ be a Laurent polynomial defined over a field $k$, and let $\Delta(f)$ be its Newton polyhedron. $f$ is said to be \textbf{nondegenerate with respect to $\Delta(f)$} if for all faces $\sigma$ of $\Delta(f)$ that do not contain the origin, the polynomials
$$\frac{\partial f_\sigma}{\partial X_1}, \frac{\partial f_\sigma}{\partial X_2},\dots, \frac{\partial f_\sigma}{\partial X_d}$$
have no common zero in $(\overline{k}^\times)^d$, where $\overline{k}$ denotes an algebraic closure of $k$.
\end{definition}
Finally, we describe the constant which will appear in a Salem-type bound:
\begin{definition}
Let $[d] = \{1, 2, \dots, d\}$. For each $A\subseteq [d]$, let

$$\mathbb{R}_A^d = \{(x_1,\dots, x_d)\in \mathbb{R}^d| x_i = 0 \text{ for all } i\in A\}.$$

Let $V_A(f)$ be the volume of $\Delta(f)\cap \mathbb{R}_A^d$ with respect to the Lebesgue measure on $\mathbb{R}_A^d$. Then we define

$$\nu(f) = \sum\limits_{A\subseteq [d]} (-1)^{|A|}(d-|A|)!V_A(f).$$
\end{definition}

With these definitions in place, we can state the main result of Adolphson and Sperber in a form relevant to our discussion of Fourier transforms:

\begin{theorem}[\cite{AdolphSperb}, theorem 4.20]\label{adolphSperbBound}
Given a $d$-dimensional polyhedron $\Delta$ in $\mathbb{R}^d$ whose vertices have nonnegative integral coordinates\footnote{so that it is the Newton polyhedron of some polynomial}, there is a set $S_\Delta$ consisting of all but finitely many prime numbers such that if char$(\mathbb{F}_q)\in S_\Delta,$ then given $f\in \mathbb{F}_q[X_1, X_2,\dots, X_d]$ with $\Delta(f) = \Delta,$ if $f$ is nondegenerate and for each $i\in [d]$, $f$ contains a monomial of the form $c_iX_i^k$ with $k\geq 1$, $c_i \neq 0$, then for any nontrivial additive character $\psi$ of $\mathbb{F}_{q^n}$, we have that
$$\left\lvert\sum\limits_{\vec{x}\in \mathbb{F}_{q^n}^d}\psi(f(\vec{x}))\right\rvert \leq \nu(f)q^{\frac{nd}{2}}.$$
\end{theorem}

\begin{remark}
In \cite{AdolphSperb}, Adolphson and Sperber establish several results that are stronger than the one stated above. First, They work in the greater generality of sums over $(\mathbb{G}_m)^r\times \mathbb{A}^s$ ($r + s = d$), where $\mathbb{G}_m$ denotes the algebraic torus $k^\times$ and $\mathbb{A}$ is the affine line $k$. In this setting, the condition that for each $i\in [d]$, $f$ contains a monomial of the form $c_iX_i^k$ with $k\geq 1,$ $c_i\neq 0$ must be replaced with a somewhat more complicated condition that $f$ is \textbf{commode} or \textbf{convenient} with respect to the set of coordinates which are allowed to be zero. This condition comes from the toric decomposition of $(\mathbb{G}_m)^r\times \mathbb{A}^s.$ If $r = d$ (so $s = 0$), the condition is vacuously satisfied. 

Second, in theorem 5.17 they establish optimality of their bound in the case of sums over $\mathbb{A}^d$ in the sense that the relevant sum over $F_{q^n}$ can be expressed as a sum of $\nu(f)$ terms of modulus $q^{\frac{nd}{2}}$ (that is, it has pure weight $d$).

Last, for sums over $\mathbb{A}^d$, when the coordinates of the the exterior normal vectors to each codimension-one face $\sigma$ over $\Delta(f)$ not containing the origin are all positive, they are able to drop the restrictions on the characteristic of $\mathbb{F}_q$ (theorem 5.18). 
\end{remark}

Using their result, we may establish the ubiquity of Salemness for graphs of polynomial functions over finite fields.
\begin{theorem}
Given a $d-1$-dimensional polyhedron $\Delta$ in $\mathbb{R}^{d-1}$ whose vertices have nonnegative integral coordinates, there is a set $S_\Delta$ consisting of all but finitely many prime numbers such that if $q = p^n$ with $p \in S_\Delta,$ then given $f\in \mathbb{F}_q[X_1, X_2,\dots, X_{d-1}]$ with $\Delta(f) = \Delta,$ if $f$ is nondegenerate and for each $i\in [d-1]$, $f$ contains a monomial of the form $X_i^{k_i}$ with $k_i\geq 2$, then for any nontrivial additive character $\psi$ of $\mathbb{F}_{q}^d$, we have that
$$\widehat{V_f}(\psi) \leq \nu(f)q^{-\frac{d+1}{2}}.$$

In particular, $V_f$ is a $\nu(f)$-Salem set.
\end{theorem}
\begin{proof}
For some $a_1, \dots, a_d\in\mathbb{F}_q$, we may write 

\[\begin{split}\widehat{V_f}(\psi) &= q^{-d}\sum\limits_{(x_1, \dots, x_{d-1})\in \mathbb{F}_q^{d-1}}\text{exp}(2\pi i \text{Tr}(a_1x_1 + \dots + a_df(x_1, x_2, \dots, x_{d-1}))/p)\\
&= q^{-d}\sum\limits_{(x_1, \dots, x_{d-1})\in \mathbb{F}_q^{d-1}} \text{exp}(2\pi i \text{Tr}(F(x_1,\dots, x_{d-1}))/p)
\end{split}\]
where $\text{Tr}$ denotes the abolute trace function from $\mathbb{F}_q$ to its prime subfield $\mathbb{F}_p$ and $F(x_1,\dots, x_{d-1}) = a_{d}f(x_1, x_2, \dots, x_{d-1}) + a_1x_1 + \dots, + a_{d-1}x_{d-1}.$ If $a_d = 0$, then the sum is $0$ by orthogonality of characters. If $a_d \neq 0$, then the assumption that for each $i\in [d-1]$, $f$ contains a monomial of the form $X_i^{k_i}$ with $k_i\geq 2$ implies that $f$ and $F$ have the same Newton polyhedron $\Delta$. Furthermore, any face of $\Delta$ containing the basis vector $\vec{e_i}$ must contain the origin, as else it would separate the origin from $k_i\vec{e_i}$, which is impossible as both lie in $\Delta$. As a result, for each face not containing the origin, $F_\sigma = a_{d}f_\sigma,$ and so $F$ is nondegenerate if and only $f$ is. Thus, by theorem \ref{adolphSperbBound}, we have that

$$\sum\limits_{(x_1, \dots, x_{d-1})\in \mathbb{F}_q} \text{exp}(2\pi i \text{Tr}(F(x_1,\dots, x_{d-1}))/p) \leq \nu(f)q^{\frac{d-1}{2}}.$$
The result follows.
\end{proof}

Now, if $\Delta$ is a polyhedron whose vertices have nonnegative integral coordinates and which contains the points $k_i\vec{e_i}$ for some $k_i\geq 2$ for all $i\in [d-1]$, then with the exception of at most finitely many characteristics, there is a nonempty Zariski open subset $U$ of the moduli space of polynomials with Newton polyhedron $\Delta$ such that for any $f\in U$, we have that $f$ is nondegenerate and for each $i\in [d-1]$, $f$ contains a monomial of the form $c_iX_i^{k_i}$ with $k_i\geq 2,$ $c_i\neq 0$ (compare with the remark and conjecture after corollary 3.14 of \cite{AdolphSperb}). We therefore obtain the following corollary.

\begin{corollary}
Let $\Delta$ be a $d-1$-dimensional polyhedron in $\mathbb{R}^{d-1}$ whose vertices have nonnegative integral coordinates. Then there is a set $S_\Delta$ consisting of all but finitely many prime numbers such that if char($\mathbb{F}_q)\in S_\Delta,$ then for a generically chosen polynomial $f\in \mathbb{F}_q[X_1, X_2,\dots, X_{d-1}]$ with $\Delta(f) = \Delta,$ $V_f$ is a $\nu(f)$-Salem set in $\mathbb{F}_q^d$.
\end{corollary}

In the special case where $\Delta = \{(x_1,\dots, x_{d-1})\in\mathbb{R}^{d-1}| x_i \geq 0\text{ for all } i\text{ and } x_1 + \dots + x_{d-1}\leq n\}$ for $n\geq 2$, we may use theorem 5.18 from \cite{AdolphSperb} in place of theorem 4.20 (our theorem \ref{adolphSperbBound}) to remove the restriction on the characteristic.

\begin{corollary}
For a generically chosen polynomial $f$ of total degree at most $k$, $V_f$ is a $\nu(f)$-Salem set in $\mathbb{F}_q^d$.
\end{corollary}

\subsection{Bounding Ideals}
The key ingredient in our proof of the main result is an upper bound on the size of proper (left or right) ideals in rings over which $V_{f}(R)$ is $C$-Salem. This will be proven using a standard ``positive density'' type argument for producing structures played off a ``transfer'' lemma and the fact that a proper ideal cannot contain the identity. We use this result to prove our main theorem for finite simple rings, and then build up first to finite semisimple rings and to general finite rings, leveraging a structure theorem for finite rings.

Let $V$ be an arbitrary subset of $R^d$. Let $E\subseteq R^d.$ We define the $V$-difference set of $E$, denoted $N(E),$ by
$$N(E) = \{(x, y)\in E\times E|x-y\in V\},$$
and we set $n(E) = |N(E)|.$ If we wish to emphasize the dependence on $V$, we may write $N_V(E)$ and $n_V(E)$ in place of $N(E)$ and $n(E)$, respectively. If $V$ is $C$-Salem, then for any sufficiently large set $E$ (relative to $d$, $C$, and $R$), then $N(E)$ must be nonempty (theorem \ref{densityArg}), that is, $n(E)>0.$

\begin{theorem}\label{densityArg}
    Let $R$ be a ring, and suppose that $V\subseteq R^d$ is $C$-Salem. Suppose $E\subseteq R^d$ satisfies 
    $$\frac{Cq^d}{|V|^{\frac{1}{2}}} < |E|.$$
    Then $n(E) > 0.$
\end{theorem}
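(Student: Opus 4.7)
The plan is the standard Fourier-analytic density argument. Writing $E(x)$ and $V(u)$ for the indicator functions of the corresponding sets, I would first express
$$n(E) = \sum_{x,y \in R^d} E(x)\,E(y)\,V(x-y).$$
Applying Fourier inversion to $V$ and using that each character $\chi$ of the finite additive group $R^d$ takes values on the unit circle, so $\chi(x-y) = \chi(x)\overline{\chi(y)}$, the triple sum factors in the characters and, after collecting the $q^d$ factors coming from $\sum_x E(x)\chi(x) = q^d\widehat E(\chi)$, one obtains
$$n(E) = q^{2d}\sum_{\chi \in (R^d)^\vee} \widehat V(\chi)\, |\widehat E(\chi)|^2.$$

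Next I would separate the contribution of the trivial character $\chi_0$, for which $\widehat V(\chi_0)=|V|/q^d$ and $|\widehat E(\chi_0)|^2 = |E|^2/q^{2d}$. This produces a main term of $|V|\,|E|^2/q^d$. For the remaining characters I would invoke the $C$-Salem hypothesis $|\widehat V(\chi)| \le C q^{-d}|V|^{1/2}$ uniformly for $\chi\ne\chi_0$, together with Plancherel in the paper's normalization, $\sum_\chi |\widehat E(\chi)|^2 = q^{-d}|E|$, to bound the absolute value of the remaining sum by
$$q^{2d}\cdot Cq^{-d}|V|^{1/2} \cdot q^{-d}|E| = C\,|V|^{1/2}\,|E|.$$

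Finally I would conclude that $n(E) > 0$ whenever the main term strictly exceeds this error bound, i.e.
$$\frac{|V|\,|E|^2}{q^d} > C\,|V|^{1/2}\,|E|,$$
which rearranges to the hypothesis $|E| > Cq^d/|V|^{1/2}$.

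The main obstacle is essentially bookkeeping: keeping every factor of $q^d$ correct under the paper's (slightly nonstandard) normalization of $\widehat f$ and deriving the induced form of Plancherel from scratch. There is no additional geometric input beyond the Salem hypothesis itself; the scheme is identical to the classical additive-combinatorics density argument in which an indicator function is expanded on the Fourier side and the trivial frequency is balanced against a uniform decay bound on all nontrivial frequencies.
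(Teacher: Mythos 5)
Your proposal is correct and follows exactly the same line as the paper's own proof: expand $n(E)$ via Fourier inversion of the indicator of $V$, isolate the trivial-frequency main term $|V||E|^2/q^d$, bound the remaining sum by $C|V|^{1/2}|E|$ using the Salem hypothesis together with Plancherel, and conclude. Nothing to add.
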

\begin{proof}
    Let $q = |R|$. We have that
   \[
   \begin{split}
      n(E) &= |\{(x, y)\in E\times E| x-y\in V\}|\\
      &= \Slim{x, y} E(x)E(y)V(x-y)\\
      &= \Slim{x, y} E(x)E(y)\Slim{\psi} \widehat{V}(\psi)\psi(x-y)\\
      &= \Slim{\psi} \widehat{V}(\psi) \left(\Slim{x} E(x)\psi(x)\right)\left(\overline{\sum\limits_y E(y)\psi(y)}\right) \\
      &= \sum\limits_\psi \widehat{V}(\psi)q^d\widehat{E}(\psi)q^d\overline{\widehat{E}(\psi)} \\
      &= q^{2d}\sum\limits_{\psi\neq \psi_0} \widehat{V}(\psi)|\widehat{E}(\psi)|^2 + q^{2d}\widehat{V}(\psi_0)|\widehat{E}(\psi_0)|^2 \\
      &= q^{2d}\sum\limits_{\psi\neq \psi_0} \widehat{V}(\psi)|\widehat{E}(\psi)|^2 + q^{2d}\frac{|V|}{q^d}\left(\frac{|E|}{q^d}\right)^2 \\
&=q^{2d}\sum\limits_{\psi\neq \psi_0} \widehat{V}(\psi)|\widehat{E}(\psi)|^2 + \frac{|V||E|^2}{q^d}.
    \end{split}
   \] 
   We let $D(E) = q^{2d}\sum\limits_{\psi\neq \psi_0} \widehat{V}(\psi)|\widehat{E}(\psi)|^2$ denote the discrepancy. We estimate it as follows:
   \[
   \begin{split}
       |D(E)| &\leq q^{2d}\text{max}_{\psi\neq \psi_0}|\widehat{V}(\psi)|\Slim{\psi\neq \psi_0}|\widehat{E}(\psi)^2|\\
       &\leq q^{2d}Cq^{-d}|V|^{\frac{1}{2}}\Slim{\psi}|\widehat{E}(\psi)^2|\\
       &= q^{d}C|V|^{\frac{1}{2}}q^{-d}|E|\\
       &= C|V|^\frac{1}{2}|E|.
   \end{split}
   \]
   $n(E)$ will be greater than 1 whenever $|D(E)|<\frac{|V||E|^2}{q^d},$ which will occur whenever
   $$C|V|^\frac{1}{2}|E|<\frac{|V||E|^2}{q^d},$$
   that is to say, whenever
   $$\frac{Cq^d}{|V|^\frac{1}{2}} < |E|,$$
   as was to be shown.
\end{proof}

\begin{remark}
    It is worth noting that \cite{IoRud} (and many others) use this style of argument to produce geometric configurations, and that \cite{CovIoPak} investigates how this argument must break down over $\mathbb{Z}/p^\ell\mathbb{Z}.$
\end{remark}
Now, suppose $V(R)$ denotes a subset of $R^d$ for each $R$ (here $d$ is fixed), where $R$ varies over the collection of all rings, such as in the case where $V = V_{f}.$ In this case we say that $R$ has $V$-Salem constant $C$ if $C$ is the smallest constant such that $V$ is a $C$-Salem set.\\

In order to apply this to $V_f$, we will need to count $R$-points. For much of our argument, we will actually need to work with the family of varieties given by translations in the last variable $X_d.$ To simplify notation, we assume that $f$ has constant term $0$. For each $c\in\mathbb{Z}$, we write $V_{f, c} = \{(x_1, x_2, \dots, x_d): x_d = f(x_1, \dots, x_{d-1}) + c\},$ the graph of the function $f+c$.  Let's now count $R$-points, and describe how these varieties relate to one another.
\begin{lemma}\label{parabolaPointCount}
    $|V_{f, c}(R)| = q^{d-1}$. 
\end{lemma}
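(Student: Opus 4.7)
The statement $|V_{f,c}(R)| = q^{d-1}$ is essentially a tautology once one unpacks the definition: $V_{f,c}(R)$ is the graph of the function $(x_1, \dots, x_{d-1}) \mapsto f(x_1, \dots, x_{d-1}) + c$, so its cardinality equals that of the domain.

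My plan is therefore to exhibit an explicit bijection $\pi : V_{f,c}(R) \to R^{d-1}$ given by projection onto the first $d-1$ coordinates. First I would note, invoking the remark following the main theorem statement, that $f$ genuinely defines a set-theoretic function $R^{d-1} \to R$: the integer coefficients lie in the center of $R$ (since the center contains $1$ and is closed under addition), and we have fixed an order of variable multiplications, so $f(x_1, \dots, x_{d-1})$ is an unambiguous element of $R$ for every tuple $(x_1, \dots, x_{d-1}) \in R^{d-1}$. Likewise $c \in \mathbb{Z}$ is identified with its image in $R$.

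Next I would check that $\pi$ is a bijection. It is injective because any point $(x_1, \dots, x_d) \in V_{f,c}(R)$ is determined by its first $d-1$ coordinates via the defining equation $x_d = f(x_1, \dots, x_{d-1}) + c$. It is surjective because, given any $(x_1, \dots, x_{d-1}) \in R^{d-1}$, we may set $x_d := f(x_1, \dots, x_{d-1}) + c$ and obtain a point of $V_{f,c}(R)$ mapping to $(x_1, \dots, x_{d-1})$. Hence $|V_{f,c}(R)| = |R^{d-1}| = q^{d-1}$.

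There is no genuine obstacle here; the only point worth being careful about is the well-definedness of $f$ as a function, which is why I would open the proof with that sentence rather than proceeding directly to the counting argument. This lemma will then be used in the sequel to control $|\widehat{V_{f,c}}(0)| = |V_{f,c}|/q^d = q^{-1}$, the trivial Fourier coefficient that appears in every application of Theorem \ref{densityArg} to $V = V_{f,c}$.
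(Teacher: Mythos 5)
Your proof is correct and follows the same approach as the paper: both observe that projection onto the first $d-1$ coordinates is a bijection because $x_d$ is determined by the defining equation. You simply spell out the well-definedness of $f$ and the bijection more explicitly, whereas the paper states the counting argument in a single sentence.
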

\begin{proof}
    For each choice of values for $x_1,\dots x_{d-1}$, we are given exactly one value of $x_d$.
\end{proof}
\begin{lemma}\label{allParabolasAreTheSame}
    $V_{f, c}$ is $C$-Salem if and only if $V_{f, 1}$ is.
\end{lemma}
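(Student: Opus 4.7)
The plan is to realize $V_{f,c}(R)$ as a translate of $V_{f,1}(R)$ inside $R^d$ and then invoke the elementary fact that translation multiplies the Fourier transform of an indicator function by a unimodular phase. Explicitly, a tuple $(x_1,\dots,x_d)$ lies in $V_{f,c}(R)$ if and only if $x_d - (c-1)\cdot 1_R = f(x_1,\dots,x_{d-1}) + 1$, so setting $a := (0,\dots,0,(c-1)\cdot 1_R) \in R^d$ one has $V_{f,c}(R) = V_{f,1}(R) + a$. Here $(c-1)\cdot 1_R$ denotes the image of the integer $c-1$ under the unique unital map $\mathbb{Z}\to R$; since we only need it as an element of the additive group of $R$, its existence requires no commutativity or further hypothesis.

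Next I would compute, for any additive character $\chi$ of $R^d$,
\[
\widehat{V_{f,c}(R)}(\chi) = \frac{1}{q^d}\sum_{x\in R^d} \mathbf{1}_{V_{f,1}(R)}(x-a)\,\chi(x) = \chi(a)\,\widehat{V_{f,1}(R)}(\chi),
\]
by the substitution $y = x-a$ together with $\chi(y+a)=\chi(y)\chi(a)$. Since $|\chi(a)|=1$, this yields $|\widehat{V_{f,c}(R)}(\chi)| = |\widehat{V_{f,1}(R)}(\chi)|$ for every character $\chi$.

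Finally, Lemma \ref{parabolaPointCount} gives $|V_{f,c}(R)| = |V_{f,1}(R)| = q^{d-1}$, so the Salem inequality $|\widehat{S}(m)|\le C\,q^{-d}|S|^{1/2}$ for all nonzero $m$ reads identically for $S=V_{f,c}(R)$ and $S=V_{f,1}(R)$. Thus one holds with constant $C$ if and only if the other does, which is exactly the lemma. There is essentially no obstacle here; the only detail worth flagging is the interpretation of the integer $c-1$ as an element of $R$, which is what makes the translation meaningful and lets the standard shift-to-phase computation go through.
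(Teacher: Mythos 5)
Your proof is correct and follows essentially the same approach as the paper: both arguments identify $V_{f,c}$ as a translate of $V_{f,1}$ by $(0,\dots,0,c-1)$ and observe that the Fourier transform changes only by the unimodular phase $\chi(a)$. You phrase this a bit more conceptually by invoking the standard translation-to-phase fact rather than recomputing the sum directly, and you are slightly more careful in noting that $|V_{f,c}| = |V_{f,1}|$ so the Salem inequality reads identically for both sets, but the underlying idea is the same.
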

\begin{proof}
    We have that \[
    \begin{split}
        \widehat{V_{f, c}}(\psi) &= \frac{1}{q^d}\Slim{x} V_{f, c}(x)\psi(x)\\
        &= \frac{1}{q^d}\Slim{x} \psi(x_1, x_2,\dots, x_{d-1}, f(x_1, x_2, \dots, x_{d-1}) + c)\\
        &= \frac{\psi(0,0,\dots,0,c-1)}{q^d}\Slim{x} \psi(x_1, x_2, \dots x_{d-1}, f(x_1, x_2, \dots, x_{d-1}) + 1)\\
        &= \psi(0,0,\dots,0,c-1)\widehat{V_{f, 1}}(x).\\
    \end{split}\]
    The two quantities have the same modulus, so one satisfies the $C$-Salem bound if and only if the other does.
\end{proof}
With this ``transfer'' lemma accomplished, we can now prove the key bound:
\begin{theorem}\label{idealBound}
    Let $R$ be a ring, with $|R| = q$, and suppose (for any $c$) that the $(d-1)$-dimensional variety $V_{f, c}\subseteq R^d$ is $C$-Salem. Let $I$ be a proper left (or right) ideal. Then 
    $$|I| \leq C^\frac{1}{d}q^{\frac{1}{2} + \frac{1}{2d}}.$$
\end{theorem}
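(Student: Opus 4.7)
The plan is to apply Theorem \ref{densityArg} in contrapositive form to a carefully chosen set built from $I$. First I would use Lemma \ref{allParabolasAreTheSame} to replace $V_{f,c}$ with $V_{f,1}$, since they share the same $C$-Salem constant; this way I can exploit that the ``constant'' being added in the defining equation is literally the ring identity $1$, which cannot lie in any proper one-sided ideal.

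Next I would set $E = I^d \subseteq R^d$, so that $|E| = |I|^d$. The key claim is that $n_{V_{f,1}}(E) = 0$. Given $x,y \in I^d$, the difference $z = x-y$ still lies in $I^d$ (since $I$ is additively closed), so $z_d \in I$ and each $z_i \in I$. Because $f$ was normalized to have zero constant term, $f(z_1,\ldots,z_{d-1})$ is an integer linear combination of monomials each of which has at least one factor lying in $I$; for a left ideal the leftmost factor absorbs the rest, and for a right ideal the rightmost factor does (integer coefficients are central, so they pose no issue). Hence $f(z_1,\ldots,z_{d-1}) \in I$. If additionally $z \in V_{f,1}$, then $z_d = f(z_1,\ldots,z_{d-1}) + 1$ would force $1 = z_d - f(z_1,\ldots,z_{d-1}) \in I$, contradicting that $I$ is proper.

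Since $n_{V_{f,1}}(E) = 0$, the contrapositive of Theorem \ref{densityArg} combined with Lemma \ref{parabolaPointCount} (which gives $|V_{f,1}(R)| = q^{d-1}$) yields
\[
|I|^d \;=\; |E| \;\leq\; \frac{C q^d}{|V_{f,1}|^{1/2}} \;=\; \frac{C q^d}{q^{(d-1)/2}} \;=\; C\, q^{(d+1)/2},
\]
and taking $d$-th roots gives the stated bound $|I| \leq C^{1/d} q^{1/2 + 1/(2d)}$. The only delicate point in the whole argument is the absorption statement $f(I,\ldots,I) \subseteq I$, which leans entirely on the zero-constant-term normalization; once this is verified, everything else is mechanical and no further ring-theoretic structure is invoked.
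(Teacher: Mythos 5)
Your proof is correct and matches the paper's argument exactly: reduce to $V_{f,1}$ via Lemma \ref{allParabolasAreTheSame}, take $E = I^d$, observe $n(E)=0$ because the defining equation of $V_{f,1}$ would force $1\in I$ (using that $f$ has zero constant term), and then apply Theorem \ref{densityArg} together with $|V_{f,1}|=q^{d-1}$. One small slip in your absorption remark: a left ideal is closed under left multiplication by $R$, so it is the \emph{rightmost} factor of a monomial landing in $I$ that guarantees membership, and symmetrically for right ideals --- the opposite of what you wrote --- though it is immaterial here since every factor of every monomial already lies in $I$.
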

\begin{proof}
    By the previous lemma, we may assume the given bound is of the form ``$V_{f,1}$ is $C$-Salem''. We also recall that $|V_{f, 1}| = q^{d-1}.$ Let $I$ be a left or right ideal of $R$, and let $E = I^d.$ If $x, y\in E$ with $x-y \in V_{f, 1}$, then 
    $$f(x_1-y_1, \dots, x_{d-1} - y_{d-1}) - (x_d - y_d) = 1,$$ contradicting properness of $I$, since $f$ is a polynomial with $0$ constant term, so $n(E) = 0.$ Thus, $|E| = |I|^d \leq \frac{Cq^d}{|V_{f,1}|^{\frac{1}{2}}} = Cq^\frac{d+1}{2},$ whence $|I| \leq C^\frac{1}{d}q^{\frac{1}{2} + \frac{1}{2d}},$ as desired.
\end{proof}
Now that we can bound the size of left and right ideals in $R$, we are off to the races. To prove our theorem, it suffices to show that only finitely many (isomorphism classes of) non-field finite rings have $V_{f, c}$-Salem number less than or equal to a fixed constant $C$. We also note that finite rings have a simple structure: every finite ring has a two-sided ideal $J$, called the Jacobson ideal, such that the quotient ring $R/J$ is a (finite) direct product of matrix rings $M_{n\times n}(F)$ where $F$ is a finite field (not that the case $n=1$ encompasses the case where a factor is a field). These matrix rings are exactly the so-called \textit{simple} finite rings. A proof of this may be found in section 2.4 of \cite{IoMPak}. We exploit this structure: we first prove our theorem for simple finite rings, then for their products (the so-called \textit{semisimple} rings), and then finally for general rings.
\subsection{Simple Rings}
\begin{theorem}\label{mostCases}
    Let $R$ be a finite simple ring (that is, the ring of $n\times n$ matrices over a finite field $F$). Suppose that  $V_{f, c}$ is a $C$-Salem set. If $d \geq 4$, then $R$ is either a finite field, the ring of $2\times 2$ matrices over a field, or comes from a finite list of exceptional rings. If $d = 3$, then in additional to those possibilities $R$ could be a ring of $3\times 3$ matrices over a finite field, and if $d = 2$ then in addition to those possibilities $R$ could be a $3\times 3$ or $4\times 4$ ring of matrices over a finite field.
\end{theorem}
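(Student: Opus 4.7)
The plan is to apply Theorem \ref{idealBound} to the largest available proper left ideal in a simple ring $R = M_n(F)$, and then read off restrictions on the pair $(n, d)$ from the resulting exponent inequality. Writing $m = |F|$ and $q = m^{n^2}$, I would take $I = \{A \in M_n(F) : Av = 0\}$ for any fixed nonzero $v \in F^n$; this is a maximal left ideal (its quotient is the simple module $F^n$), and its cardinality is $m^{n(n-1)} = q^{1 - 1/n}$. Plugging into Theorem \ref{idealBound} yields
$$q^{1 - 1/n} \;\leq\; C^{1/d}\, q^{1/2 + 1/(2d)},$$
equivalently $q^{\alpha} \leq C^{1/d}$ with $\alpha := 1/2 - 1/n - 1/(2d)$.

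Next I would split on the sign of $\alpha$. If $\alpha > 0$, the inequality caps $q$, simultaneously bounding both $m$ and $n$, and thus producing only finitely many isomorphism classes of such $R$; these will form the ``finite list of exceptional rings'' referenced in the statement. If $\alpha \leq 0$, the inequality is vacuous, so $R = M_n(F)$ is permitted for fields $F$ of arbitrary size. The condition $\alpha \leq 0$ unwinds to $1/n + 1/(2d) \geq 1/2$, and an elementary case check on $n$ shows this holds precisely when $n = 1$ (a field, for any $d$), $n = 2$ (for any $d$), $n = 3$ with $d \leq 3$, or $n = 4$ with $d \leq 2$.

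Reading this table by $d$ recovers the claimed classification: for $d \geq 4$ only $n \in \{1, 2\}$ survives; for $d = 3$ additionally $n = 3$ is permitted; for $d = 2$ additionally $n \in \{3, 4\}$ is permitted; every other simple $R$ lands in the finite exceptional list. The argument reduces to bookkeeping once Theorem \ref{idealBound} is available. The only step that requires any thought is the first: choosing a maximal (rather than merely large) left ideal is what calibrates the bound sharply enough to exclude $n \geq 5$ across all admissible $d$, since smaller left ideals, such as the minimal ones of size $q^{1/n}$, would produce a strictly weaker classification. The borderline case $\alpha = 0$, which occurs exactly at $(d, n) \in \{(2, 4), (3, 3)\}$, relies on the harmless fact that any Salem constant for an indicator function satisfies $C \geq 1$ by Plancherel, so that $1 \leq C^{1/d}$ is automatic and no extra restriction leaks in.
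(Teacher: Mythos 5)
Your approach is essentially identical to the paper's: you take the maximal left ideal of matrices annihilating a fixed vector (equivalently, matrices with a fixed zero column), plug $|I| = |F|^{n^2-n}$ into Theorem \ref{idealBound}, and analyze the sign of the resulting exponent $\alpha = 1/2 - 1/n - 1/(2d)$ to separate the cases where $|F|$ is bounded from those where it is not. The classification by $(n,d)$ and the bookkeeping match the paper exactly.

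One small inaccuracy worth noting: your final remark claims $C \geq 1$ for any indicator function by Plancherel, but this is false in general. Plancherel (Lemma \ref{C-bound}) only yields $C \geq \sqrt{1 - |V|/|R|^d}$, which is strictly less than $1$; and simple examples (for instance the complement of a point) show the Salem constant can be arbitrarily small. Fortunately this remark is dispensable: the theorem is a one-directional exclusion statement, asserting that the non-field, non-small-dimension $C$-Salem simple rings form a finite list. In the borderline cases $\alpha = 0$, whether the inequality $1 \leq C^{1/d}$ holds or fails is irrelevant — if it fails the ring is simply excluded too, which only strengthens the conclusion. The paper itself silently omits this case for exactly that reason, treating only the strict inequality $\alpha > 0$ as producing a restriction.
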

To simplify the statement of this result, we introduce the following definition:
\begin{definition}
    We say that a matrix ring $M_{n\times n}(F)$ has \textbf{small dimension relative to $d$} if either $n = 2,$ $n = 3$ and $d = 2$ or $3,$ or $n = 4$ and $d = 2.$
\end{definition}
With this definition, we may restate theorem \ref{mostCases} as
\begin{theorem*}
    Let $R$ be a finite simple ring (so it is the ring of $n\times n$ matrices over a finite field $F$). Suppose that  $V_{f, c}$ is a $C$-Salem set. Then $R$ is either a finite field, a matrix ring of small dimension relative to $d$, or comes from a finite list of exceptional rings (the list depends on C).
\end{theorem*}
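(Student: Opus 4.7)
The plan is to apply Theorem~\ref{idealBound} to a maximal proper left ideal of $R = M_{n\times n}(F)$, where $F$ is a finite field with $q_0$ elements, so that $q := |R| = q_0^{n^2}$. The subset $L \subseteq R$ of matrices whose first column is zero is a left ideal, since for any $A \in L$ and any $B \in R$ the first column of $BA$ equals $B$ applied to the first column of $A$, hence is zero. Its size is $|L| = q_0^{n(n-1)}$, and in fact $L$ is maximal because $A \mapsto (\text{first column of } A)$ exhibits $R/L \cong F^n$ as a simple $R$-module.

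Plugging $|L|$ into Theorem~\ref{idealBound} yields $q_0^{n(n-1)} \leq C^{1/d}\, q^{(d+1)/(2d)} = C^{1/d}\, q_0^{n^2(d+1)/(2d)}$. Taking $\log_{q_0}$ and rearranging gives the key inequality
\[
\phi(n,d) \leq \frac{\log C}{d\, \log q_0}, \qquad \text{where } \phi(n,d) := \frac{n^2(d-1)}{2d} - n.
\]
When $\phi(n,d) \leq 0$ this imposes no restriction on $q_0$ whatsoever; when $\phi(n,d) > 0$ it forces $q_0 \leq C^{1/(d\,\phi(n,d))}$.

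The remainder of the proof is a finite case analysis of the sign of $\phi$. A direct computation gives $\phi(1,d) = -(d{+}1)/(2d) < 0$ and $\phi(2,d) = -2/d < 0$ for every $d \geq 2$; $\phi(3,d) = 3(d{-}3)/(2d) \leq 0$ iff $d \leq 3$; $\phi(4,d) = 4(d{-}2)/d \leq 0$ iff $d \leq 2$; and $\phi(n,d) > 0$ for all $n \geq 5$ and $d \geq 2$. These non-restrictive cases are exactly the pairs in the paper's definition of \emph{small dimension relative to $d$}, together with the field case $n=1$. For every remaining pair $(n,d)$ one has $\phi(n,d) > 0$, which bounds $q_0$ in terms of $C$, $n$, and $d$; and since $\phi(n,d)$ grows quadratically in $n$ for each fixed $d$, the requirement $q_0 \geq 2$ additionally bounds $n$. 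Only finitely many pairs $(n, q_0)$ then remain, giving a finite list of exceptional rings (depending on $C$ and $d$).

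The main obstacle is essentially accounting: verifying that the sign-change boundary of $\phi(n,d)$ matches the paper's definition of \emph{small dimension relative to $d$} exactly. No substantive new ingredient beyond Theorem~\ref{idealBound} and the existence of a proper left ideal of size $q_0^{n(n-1)}$ is required; once the right ideal is chosen, the entire content of the statement is numerical.
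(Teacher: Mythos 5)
Your proof is correct and follows essentially the same route as the paper: apply Theorem~\ref{idealBound} to a proper left ideal of size $|F|^{n^2-n}$ (the paper uses the ideal of matrices with last column zero, you use first column zero, which is an immaterial difference), rearrange to isolate the exponent $\phi(n,d) = \tfrac{n^2(d-1)}{2d} - n$, and do a finite sign analysis that recovers exactly the paper's definition of \emph{small dimension relative to $d$}. No gaps.
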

\begin{proof}
    Let $I$ be the maximal ideal of all matrices with right column 0. Clearly $|R| = |F|^{n^2}$ and $|I| = |F|^{n^2-n}$. Applying the previous lemma, we have that:
    $$|F|^{n^2-n}\leq C^\frac{1}{d}|F|^{\frac{n^2}{2} + \frac{n^2}{2d}}.$$
    Rearranging, we have:
    $$|F|^{\frac{n^2}{2}-\frac{n^2}{2d} - n }\leq C^\frac{1}{d}.$$
    Clearly, since $|F|\geq 2,$ this inequality may only hold for finitely many choices of $n.$ For a fixed value of $n$, if $\frac{n^2}{2}-\frac{n^2}{2d} - n > 0,$ this may only hold for finitely many values of $|F|,$ and thus for finitely many (isomorphism classes of) finite fields. This occurs if $n^2(\frac{1}{2} - \frac{1}{2d}) > n,$ i.e., if $\frac{d-1}{2d} > \frac{1}{n},$ which may be re-arranged as $1-\frac{1}{d} > \frac{2}{n},$ or as $n<\frac{2}{1-\frac{1}{d}}.$
    
    If $d \geq 4,$ $1-\frac{1}{d} \geq \frac{3}{4}$, so the inequality holds if $\frac{3}{4} > \frac{2}{n},$ i.e. if $n < \frac{8}{3} < 3,$ so either we have a finite field ($n = 1$) or a ring of $2\times 2$ matrices. We remark that there is no way from this argument alone to rule out the $2\times 2$ matrices, just as in proposition 2.7 of \cite{IoMPak}, even by choosing a really large dimension, as we will always have an inequality of the form $n < \frac{2}{1-\epsilon}.$

    If $d = 3,$ we may only rule out the cases where $n$ satisfies $\frac{2}{3} > \frac{2}{n},$ so can only rule out $n\times n$ matrices for $n>3$; for $d = 2$ we get $\frac{1}{2} > \frac{2}{n},$ and can only rule out $n> 4.$ 
\end{proof}

\begin{remark}\label{generalizedHyperbolas}

Using these same techniques, we can also very quickly show that certain varieties do \textbf{not} have optimal Fourier decay, even over finite fields. As an example, we will work with the generalized hyperbola. Fix a dimension $d\geq 3$, and define the generalized hyperbola $H = \{x = (x_1, x_2, \dots x_d)\in R^d | x_1x_2\dots x_d = 1\}$. This variety is also known as the \textit{Hamming Variety}.

By a result of Deligne, for a frequency vector $(a_1,\dots, a_d)$ with no nonzero components in $\mathbb{F}_q^d$ $(q = p^k)$, we have that

$$\left\lvert\sum\limits_{x_1x_2\cdots x_d = 1}exp(2\pi i Tr(a_1x_1 + \dots + a_dx_d)/p)\right\rvert \leq C(d) q^{\frac{d-1}{2}},$$

in line with the square root law heuristic. However, when the frequence vector has some zero components, the decay is not as good. The precise details were worked out by Cheong, Koh, and Pham in \cite{Hamming}:

\begin{theorem}[\cite{Hamming}, lemma 3.1]
Suppose that $\ell$ of the components of $(a_1, a_2, \dots, a_d)$ are 0. If $\ell = 0$, then we have

$$\left\lvert\sum\limits_{x_1x_2\cdots x_d = 1}exp(2\pi i Tr(a_1x_1 + \dots + a_dx_d)/p)\right\rvert \leq C(d) q^{\frac{d-1}{2}};$$

if $1\leq \ell \leq d,$ then we have

$$\left\lvert\sum\limits_{x_1x_2\cdots x_d = 1}exp(2\pi i Tr(a_1x_1 + \dots + a_dx_d)/p)\right\rvert = (q-1)^{\ell - 1},$$

In particular, if at least one of the components of the frequency vector $(a_1,\dots, a_d)$ is zero, then $|\widehat{H_j}(a_1,\dots, a_d)|$ is of order $q^{-(d+1-\ell)}$.
\end{theorem}

Interestingly, the relatively qualitative techniques of our paper can be used to show the weaker result that the $(d-1)$-dimensional generalized hyperboloid is not a $C$-Salem set over finite fields assuming that $d\geq 4$. To avoid complications having to do with $j$ having common factors with the characteristic of the underlying ring, we concentrate on $H_1$. 

First, we count points. Each coordinate of $x\in H_1$ must be a unit, and given a choice of $d-1$ units $x_1, x_2, \dots, x_{d-1},$ there is exactly one choice of $x_d$ such that $x_1x_2x_3\dots x_{d-1}x_d = 1,$ namely $x_d = x_{d-1}^{-1}x_{d-2}^{-1}\cdots x_1^{-1},$ so $|H_1| = |R^*|^{d-1}.$

Thus, in our case, theorem \ref{densityArg} says that:

\begin{theorem}
    Let $R$ be a ring, and suppose that $H_1\subseteq R^d$ is $C$-Salem. Suppose $E\subseteq R^d$ satisfies 
    $$\frac{Cq^d}{|R^*|^{\frac{d-1}{2}}} < |E|.$$
    Then $n(E) > 0.$
\end{theorem}

(Recall that $N(E) = \{(x, y)\in E\times E|x-y\in H_1,\}$ and $n(E) = |N(E)|.$ )

For the generalized hyperbola, we can get a much stronger bound on the size of ideals, because we only need one coordinate to live inside the ideal, not all of them.

\begin{theorem}
    Let $R$ be a ring, and suppose that the $d$-dimensional hyperbola $H_1\subseteq R^d$ is $C$-Salem. Let $I$ be a proper left (or right) ideal of $R$. Then
    $$|I|\leq\frac{Cq}{|R^*|^\frac{d-1}{2}}.$$
\end{theorem}
\begin{proof}
    We prove the result for left ideals $I$; right ideals follow similarly. Let 
    $$E = \underbrace{R\times R\times \cdots R}_{d-1 \,times}\times I.$$
    Clearly, for $x = (x_1, x_2, \dots x_d), y = (y_1, y_2, \dots, y_d)\in E,$ $(x_d-y_d)\in I,$ so $(x_1-y_1)(x_2-y_2)\dots (x_{d-1}-y_{d-1})(x_d - y_x)\in I.$ As $I$ is proper, $1\notin I$, so $n(E) = 0.$ By the previous result, $E = q^{d-1}|I|\leq \frac{Cq^d}{|R^*|^\frac{d-1}{2}},$ so 
    $$|I| \leq\frac{Cq}{|R^*|^\frac{d-1}{2}},$$
    as was to be shown.
     
\end{proof}

For matrix rings, we expect the ratio $q/|R^*|$ to tend to $1$ as $q$ gets large, so we should be able to achieve $|I|<1$ for any proper ideal of any sufficiently large matrix ring satisfying optimal Fourier decay so long as $d > 3$. This is impossible, as $\{0\}$ is a nonempty proper ideal of any ring, so that $H_1$ is only $C$-Salem for finitely many simple rings. We now make this precise:
\begin{theorem}
    Let $d>3$, and fix $C>0.$ $H$ is only $C$-Salem over finitely many simple finite rings, including fields.
\end{theorem}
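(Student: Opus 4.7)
The plan is to apply the ideal bound from the preceding theorem to the \emph{trivial} ideal $I = \{0\}$, which is a proper two-sided ideal of any nontrivial finite simple ring. Since $|I| = 1$, the bound reduces to
$$1 \leq \frac{Cq}{|R^*|^{(d-1)/2}}, \qquad \text{i.e.,} \qquad |R^*|^{(d-1)/2} \leq C\,q,$$
so any simple ring over which $H$ is $C$-Salem must satisfy this inequality.

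Next I would write $R = M_{n\times n}(F)$ with $|F| = q_0$, so that $q = |R| = q_0^{n^2}$ and
$$|R^*| = |GL_n(F)| = q_0^{n^2}\prod_{i=1}^{n}\bigl(1-q_0^{-i}\bigr) \geq c\, q_0^{n^2},$$
where $c := \prod_{i=1}^{\infty}(1 - 2^{-i}) > 0$ is a universal positive constant (the infinite product converges, and dominates the finite products uniformly in $n$ and in $q_0 \geq 2$). Substituting this uniform lower bound into the previous inequality and rearranging yields the decisive estimate
$$q_0^{n^2(d-3)/2} \leq C \cdot c^{-(d-1)/2}.$$

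Since $d > 3$ makes the exponent $n^2(d-3)/2$ strictly positive, this bounds the quantity $n^2 \log q_0$ from above by an absolute constant depending only on $C$ and $d$. With $q_0 \geq 2$ and $n \geq 1$, both $n$ and $q_0$ are therefore forced to lie in bounded ranges (for each fixed $n$, $q_0$ is bounded; for each fixed $q_0 \geq 2$, $n$ is bounded), so only finitely many pairs $(n, q_0)$ survive and hence only finitely many isomorphism classes of simple rings. The only nontrivial bookkeeping --- and the mildest of obstacles --- is verifying the uniform lower bound $|GL_n(F)|/q_0^{n^2} \geq c > 0$ independent of both $n$ and $q_0$; this is a standard consequence of the convergence of the Euler-type infinite product above.
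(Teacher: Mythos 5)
Your proof is correct and reaches the same conclusion via essentially the same mechanism as the paper: apply the preceding ideal-bound theorem to a proper ideal of $M_{n\times n}(F)$, combine with a uniform lower bound on $|GL_n(F)|/|F|^{n^2}$, and observe that $d>3$ makes the resulting exponent on $|F|$ strictly positive. The one point of departure is your choice of ideal. You plug in the trivial ideal $I=\{0\}$ (precisely what the paper's own prefatory remark ``$\{0\}$ is a nonempty proper ideal of any ring'' hints at), whereas the paper's proof instead uses the ideal of matrices with vanishing right column, for which $|I| = |F|^{n^2-n}$. For $n\geq 2$ the paper's ideal gives a quantitatively sharper inequality since the left-hand side grows with $n$, but for the stated finiteness claim when $d>3$ the trivial ideal is already enough, and your version treats all $n\geq 1$ uniformly without privileging any particular ideal. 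You also establish the uniform unit-density constant $c=\prod_{i\geq 1}(1-2^{-i})>0$ from scratch, while the paper imports the equivalent bound $\phi_R(n,|F|)\geq \tfrac14$ from \cite{IoMPak}; your derivation is self-contained and your constant is in fact slightly better. In short: same skeleton, a mild simplification in the choice of ideal, and a self-contained replacement for the cited unit-count estimate.
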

\begin{proof}
    Let $\phi_R(n, |F|)$ be as in \cite{IoMPak}, recalling the bound $\phi_R(n, |F|) \geq \frac{1}{4}.$ We estimate the size of the left ideal $I$ of matrices with right column $0$. Applying the previous theorem, we have that:

        $$|F|^{n^2-n}\leq\frac{C|F|^{n^2}}{(|F|^{n^2}\phi_R(n, |F|))^\frac{d-1}{2}},$$
        so (squaring both sides)
        $$|F|^{2n^2-2n} \leq \frac{C^2|F|^{2n^2}}{(|F|^{n^2}\phi_R(n, |F|))^{d-1}},$$
        and re-arranging and applying the lower bound on $\phi_R$,
        $$|F|^{dn^2-n^2-2n}\left(\frac{1}{4}\right)^{d-1}\leq C^2,$$
        so 
        $$|F|\leq \left(4^{d-1}C^2\right)^\frac{1}{dn^2-n^2-2n} = \left(4^{d-1}C^2\right)^\frac{1}{n(dn-n-2)}.$$
        As $d> 3$, we have that $d-1\geq 3,$ and $(d-1)n-2 \geq 1,$ for all $n$, so for any fixed $C$ and $n$, only finitely many choices of $F$ satisfy the bound. Further, as $|F|\geq 2,$ there are only finitely many choices of $n$ for which this holds (for any $|F|$), and thus there are only finitely many matrix rings for which $H$ is $C$-Salem.
\end{proof}

The generalized hyperbola only fails the Salem condition by a hair, having square root decay for ``most'' frequency vectors. This example suggests an interesting question for future research, kindly pointed out by the anonymous reviewer: if one weakens the $C$-Salem condition by allowing a few frequency vectors to have weaker decay, what can be said about the structure of the underlying ring?
\end{remark}

\subsection{Semi-Simple Rings}
Now, we extend these results to semi-simple rings. To do so, we will need two lemmas. The first is a formula for the sharpest possible Salem-constant in a product of finite rings:
\begin{lemma}\label{productsAreNice}
    Let $C_R$ denote the $V_{f, c}$-Salem number of $R$ (note that by lemma \ref{allParabolasAreTheSame} this is well-defined and independent of $c$). Suppose $R = R_1\times R_2$ is a nontrivial product decomposition. Then $C_R = \text{max}(C_{R_1}|V_{f, c}(R_2)|, C_{R_2}|V_{f, c}(R_1)|).$
\end{lemma}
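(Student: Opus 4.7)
The plan is to push the product decomposition $R=R_1\times R_2$ through both ingredients that define $C_R$: the variety $V_{f,c}$ and the Fourier transform on $R^d$.

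First I would observe that because $f$ has integer coefficients, evaluating $f$ on $R_1\times R_2$ is component-wise. Hence $V_{f,c}(R)=V_{f,c}(R_1)\times V_{f,c}(R_2)$ inside $R^d=R_1^d\times R_2^d$, and $|V_{f,c}(R)|=|V_{f,c}(R_1)|\cdot|V_{f,c}(R_2)|$ (consistent with the point count of Lemma~\ref{parabolaPointCount}).

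Second, every character of the additive group $R^d=R_1^d\times R_2^d$ factors uniquely as $\chi_{(m_1,m_2)}=\chi_{m_1}\chi_{m_2}$. Combined with the first step and the multiplicative normalization $1/|R|^d=(1/|R_1|^d)(1/|R_2|^d)$, the Fourier transform factors:
\[
\widehat{V_{f,c}(R)}(m_1,m_2)\;=\;\widehat{V_{f,c}(R_1)}(m_1)\cdot\widehat{V_{f,c}(R_2)}(m_2).
\]
In particular $\widehat{V_{f,c}(R_i)}(0)=|V_{f,c}(R_i)|/|R_i|^d$, so when one component of $m$ is trivial the corresponding factor collapses to a ratio of sizes.

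Third, I would determine $C_R$ by maximizing $|\widehat{V_{f,c}(R)}(m)|\cdot|R|^d\cdot|V_{f,c}(R)|^{-1/2}$ over nonzero $m=(m_1,m_2)$, splitting by which components are zero. For $m_2=0$, $m_1\ne 0$, substituting the Salem bound on $R_1$ together with the explicit $\widehat{V_{f,c}(R_2)}(0)=|V_{f,c}(R_2)|/|R_2|^d$ and simplifying shows this case demands exactly $C_R\ge C_{R_1}\,|V_{f,c}(R_2)|^{1/2}$ (with equality attained when the Salem bound on $R_1$ is). Symmetrically, $m_1=0$, $m_2\ne 0$ produces $C_{R_2}\,|V_{f,c}(R_1)|^{1/2}$. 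The remaining case $m_1,m_2\ne 0$ multiplies the two Salem bounds to give at most $C_{R_1}C_{R_2}$; using the trivial pointwise estimate $|\widehat{S}(m)|\le|S|/|R|^d$ (which implies $C_{R_i}\le|V_{f,c}(R_i)|^{1/2}$), this term is always dominated by the two previous cases. Taking the maximum over the three cases yields the claimed formula.

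The main obstacle is careful bookkeeping: keeping track of the $|R|^d$ vs $|R_i|^d$ and $|V|^{1/2}$ vs $|V_i|^{1/2}$ normalization factors so that the two per-factor Salem bounds aggregate into a Salem bound for $R$ with the correct constant, and verifying that each maximum is actually attained (so that the formula is sharp and not merely an upper bound) by exhibiting, in each case, a character on one factor where the Salem constant is approached and the trivial character on the other factor.
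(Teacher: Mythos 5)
Your argument is essentially identical to the paper's: factor the characters and the variety across $R = R_1 \times R_2$ so the Fourier transform is multiplicative, split into the three cases according to which of $m_1$, $m_2$ vanish, and use the trivial bound $C_{R_i} \le |V_{f,c}(R_i)|^{1/2}$ to eliminate the both-nonzero case. One side remark: you correctly derive $\max\bigl(C_{R_1}|V_{f,c}(R_2)|^{1/2},\, C_{R_2}|V_{f,c}(R_1)|^{1/2}\bigr)$, which is also what the paper's proof yields; the lemma as stated drops the square roots, evidently a typographical slip.
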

\begin{proof}
    As $R = R_1\times R_2$ is also a decomposition of the underlying abelian groups, the (irreducible) characters of $R$ are products of characters of $R_1$ and $R_2.$ The variety $V_{f, c}$ also decomposes as $V_{f, c}(R) = V_{f, c}(R_1)\times V_{f, c}(R_2)$ under the decomposition $R^d = R_1^d\times R_2^d.$ For any $\psi = (\psi_1, \psi_2)\in (R_1^d)^\vee\times (R_2^d)^\vee = (R^d)^\vee$ we may see
    \[
    \begin{split}
        \widehat{V_{f, c}(R)}(\psi_1, \psi_2) &= \frac{1}{|R|^d}\Slim{(\overrightarrow{x_1}, \overrightarrow{x_2})\in V_{f,c}(R_1)\times V_{f,c}(R_2)}\psi_1(-\overrightarrow{x_1})\psi_2(-\overrightarrow{x_2})\\
        &= \widehat{V_{f, c}(R_1)}(\psi_1)\widehat{V_{f, c}(R_2)}(\psi_2).
    \end{split}
    \]
    The maximum of $|\widehat{V_{f, c}(R_i)}(\psi_i)|$ as $\psi_i$ varies over the nonzero elements is by definition $C_{R_i}|R_i|^{-d}|V_{f, c}(R_i)|^\frac{1}{2}$, while the value of $|\widehat{V_{f, c}(R_i)}(\psi_0)|$ is $|R_j|^{-d}||V_{f, c}(R_i)|$ for $i = 1,2$ by the definition of the Fourier transform. Thus we have that
    $$\cfrac{|R|^d}{\sqrt{|V_{f, c}(R)}}\max_{(m_1, m_2)\neq(0, 0)}|\widehat{V_{f, c}(R)}(m_1, m_2)| = \max\{C_{R_1}C_{R_2}, C_{R_1}|V_{f, c}(R_2)|^\frac{1}{2}, C_{R_2}|V_{f, c}(R_1)|^\frac{1}{2}\}  
$$
    
    by considering the three cases where $(m_1, m_2)$ are both nonzero, $m_1 = 0$, and $m_2 = 0.$ The trivial bound $C_{R_j}\leq\sqrt{|V_{f, c}(R_j)|}$ shows that the maximum is one of the last two terms.
\end{proof}
The latter of our lemmas is a lower bound on the $V_{f, c}$-Salem constant of any ring. For our purposes, a rather weak bound will do. As the bound depends only on point-counting and Fourier analysis, and not on any algebraic structure, we state it for a general set $E$:
\begin{lemma}\label{C-bound}
    Let $E\subseteq R^d,$ and suppose $E$ is $C$-Salem. Then
    $$\sqrt{1-\frac{|E|}{|R|^d}} \leq C.$$
    In particular, if $V_{f, c}$ is $C$-Salem, then $C\geq \sqrt\frac{1}{2}$
\end{lemma}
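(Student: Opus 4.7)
The natural approach is Plancherel combined with a pigeonhole on the nonzero Fourier coefficients. The idea is that the $C$-Salem hypothesis gives a uniform upper bound on $|\widehat{E}(m)|$ for $m\neq 0$, but Plancherel fixes the total $\ell^2$-mass of $\widehat{E}$, so the bound cannot be too small.

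First I would compute $\sum_{\chi}|\widehat{E}(\chi)|^2$ using orthogonality of the characters of the additive group of $R^d$. With the normalization in the paper, a direct calculation (expand $|\widehat{E}(m)|^2$ as a double sum and use $\sum_m\chi_m(x-y) = |R|^d\delta_{x,y}$) gives the identity
\[
\sum_{\chi\in(R^d)^\vee}|\widehat{E}(\chi)|^2 = \frac{|E|}{|R|^d}.
\]
Next I would separate the trivial character, which satisfies $\widehat{E}(0) = |E|/|R|^d$, and rearrange to obtain
\[
\sum_{\chi\neq 0}|\widehat{E}(\chi)|^2 = \frac{|E|}{|R|^d}\left(1 - \frac{|E|}{|R|^d}\right).
\]

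Then I would apply the $C$-Salem hypothesis $|\widehat{E}(\chi)|\leq C|R|^{-d}|E|^{1/2}$ for $\chi\neq 0$, bounding the sum above by $(|R|^d-1)\cdot C^2|R|^{-2d}|E|$. Combining with the exact expression from Plancherel and cancelling $|E|/|R|^d$ yields
\[
1 - \frac{|E|}{|R|^d} \leq \frac{|R|^d-1}{|R|^d}\,C^2 \leq C^2,
\]
which is the first claim. For the second claim, I would apply this with $E = V_{f,c}(R)$, noting from Lemma \ref{parabolaPointCount} that $|V_{f,c}(R)| = q^{d-1}$ and so $|E|/|R|^d = 1/q \leq 1/2$ since $q = |R|\geq 2$.

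There is no real obstacle here beyond being careful with the Fourier normalization, which differs from some references by a factor of $|R|^{-d}$; the only thing to watch is that the Plancherel identity matches the convention declared in the preliminaries. The bound is intentionally crude (we throw away the factor $(|R|^d-1)/|R|^d$), which is fine since the statement only claims $\sqrt{1-|E|/|R|^d}\leq C$, and crude suffices to force $C\geq \sqrt{1/2}$ in the paraboloid application.
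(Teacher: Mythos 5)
Your proposal is correct and is essentially the paper's own argument: Plancherel for the normalization in use, peel off the trivial character, apply the $C$-Salem bound to the nonzero frequencies, cancel the common factor $|E|/|R|^d$, and then substitute $|V_{f,c}| = q^{d-1}$ with $q\geq 2$. The only cosmetic difference is that you retain the factor $(|R|^d-1)/|R|^d$ one step longer before discarding it, which the paper drops immediately.
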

\begin{proof}
   By Plancherel's theorem, we have
    $$|R|^d\Slim{m\in R^d}|\widehat{E}|^2 = \Slim{x\in R^2}|E(x)| = |E|.$$
    Thus
    $$|R|^d\left(\frac{|E|^2}{|R|^{2d}} + \Slim{m\neq 0}|\widehat{E}|^2\right) = |E|,$$
    so
    $$\frac{|E|}{|R|^d} - \frac{|E|^2}{|R|^{2d}} \leq \Slim{m \neq 0}C^2|R|^{-2d}|E|\leq C^2|R|^{-d}|E|,$$
    and hence
    $$1-\frac{|E|}{|R|^d}\leq C^2$$
    from which the first result follows. When $E$ = $V_{f, c},$ we have by lemma \ref{parabolaPointCount} that $|V_{f, c}| = |R|^{d-1},$ whence the lower bound becomes $\sqrt{1-\frac{1}{|R|}},$ and as $|R| \geq 2$ (as $1\neq 0$ in our rings) the second bound follows. 
\end{proof}
Now, we have the following theorem:
\begin{theorem}\label{semisimpleRings}
    Let $R$ be a finite semi-simple ring, and suppose that $V_{f, c}$ is a $C$-Salem set. Then $R$ is either a finite field or a matrix ring of small dimension relative to $d$, or comes from a finite list of exceptional rings (the list depends on $C$).
\end{theorem}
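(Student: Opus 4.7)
The plan is to reduce to the simple case already handled by Theorem \ref{mostCases}, using Lemma \ref{productsAreNice} to convert control of $C_R$ into a size bound on each simple factor of $R$.

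First, I would invoke the structure theorem for finite semi-simple rings to write $R = R_1 \times \cdots \times R_k$ with each $R_i$ a finite simple ring. If $k = 1$, then $R$ is itself simple and Theorem \ref{mostCases} gives the conclusion directly, so assume $k \geq 2$. For each index $i$, group the remaining factors as $S_i := \prod_{j \neq i} R_j$, so that $R = R_i \times S_i$. Applying Lemma \ref{productsAreNice} to this splitting yields $C_R \geq C_{R_i}\, |V_{f,c}(S_i)|^{1/2}$, and by Lemma \ref{parabolaPointCount} this right-hand side equals $C_{R_i} \prod_{j \neq i} |R_j|^{(d-1)/2}$. Combining with the universal lower bound $C_{R_i} \geq \sqrt{1/2}$ from Lemma \ref{C-bound} gives
$$\prod_{j \neq i} |R_j|^{(d-1)/2} \;\leq\; C\sqrt{2}.$$
Since every $|R_j| \geq 2$ and $d \geq 2$, this inequality simultaneously bounds the number of factors $k$ (the left side is at least $2^{(k-1)(d-1)/2}$) and, by letting $i$ range over all indices, each individual $|R_j|$ in terms of $C$ and $d$ alone.

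There are only finitely many isomorphism classes of finite simple rings of a given bounded size, and hence only finitely many semi-simple rings that can be assembled from a bounded number of bounded-size simple factors. So when $k \geq 2$, the ring $R$ belongs to a finite list depending only on $C$ and $d$, which we fold into the exceptional list already produced by Theorem \ref{mostCases} in the simple case. I do not expect a serious obstacle: the argument is essentially bookkeeping once Lemma \ref{productsAreNice} is iterated. The one point worth verifying carefully is that Lemma \ref{productsAreNice}, stated for a 2-fold product, still delivers the inequality $C_R \geq C_{R_i}\,|V_{f,c}(S_i)|^{1/2}$ after regrouping all but one factor into $S_i$; this is immediate from the proof of that lemma, which only uses the decomposition of the additive characters and the product structure $V_{f,c}(R) = V_{f,c}(R_i) \times V_{f,c}(S_i)$.
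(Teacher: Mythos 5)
Your proof is correct and takes essentially the same route as the paper: both split off a simple factor, apply Lemma \ref{productsAreNice} together with the point count from Lemma \ref{parabolaPointCount} and the universal lower bound from Lemma \ref{C-bound} to bound the size of the complementary factor, and conclude there are only finitely many non-simple possibilities. (The paper uses a single two-fold split $R = M_{n_1\times n_1}(F_1)\times R_2$ and bounds both pieces directly, whereas you iterate over all complementary subproducts $S_i$; this is only a cosmetic difference in bookkeeping.)
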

\begin{proof}
    Suppose that $R$ is semisimple. If $R$ is in fact simple, then this is just Theorem \ref{mostCases}. If not, then suppose we may write $R = M_{n_1\times n_1}(F_1)\times R_2,$ where $R_2$ is nontrivial and semisimple. Let $C_R$ be the $V_{f, c}$-Salem constant of $R$, and define $C_{R_1}$ and $C_{R_2}$ similarly. By lemma \ref{productsAreNice}, $C_{R_2}|V_{f, c}(M_{n_1\times n_1}(F_1))|^\frac{1}{2}\leq C_R,$ and $C_{R_1}|V_{f, c}(R_2)|^\frac{1}{2}\leq C_R.$ Using our lower bounds on $C_1$ and $C_2,$ we get upper bounds on $|V_{f, c}(M_{n_1\times n_1}(F_1))|$ and on $|R_2|$:
    $$|V_{f, c}(M_{n_1}(F_1))| = |M_{n_1\times n_1}(F_1)|^{(d-1)}\leq 2C_R^2,$$
    $$|V_{f, c}(R_2)| = |R_2|^{d-1}\leq 2C_R^2.$$
    In particular, we have an upper bound on the size of $|R_2|$, and thus $R_2$ can only come from a finite list of rings. As for $M_{n_1\times n_1}(F_1),$ we have that $|M_{n_1\times n_1}(F_1)| = |F_1|^{n_1^2},$ so we have an upper bound on $|F_1|$ and, as $|F_1|\geq 2$, an upper bound on $n_1$ as well. Thus, there are only finitely many possibilities for $M_{n_1\times n_1}(F_1)$, and hence only finitely many (non-simple) possibilities for $R$. 
\end{proof}
\begin{remark}
    Due to the simiplicity of the point-counting for $V_{f, c}(R),$ this proof is much simpler than the corresponding proof for hyperbolas, as we can directly bound the size of each factor of $R$, rather than just the unit group of each factor of $R$.
\end{remark}
\subsection{General Rings}
Now, we need only show that at most finitely many rings with a bounded $V_{f, c}$-Salem number have a non-zero Jacobson radical. First, we show that a ring with a non-zero Jacobson radical has larger $V_{f, c}$-Salem number than its semi-simple part.
\begin{lemma}\label{JacobsonBound}
    Let $R$ be a finite ring with Jacobson radical $J$, and let $S = R/J$ be the semisimple part of $R$. If $C_R$ and $C_S=C_{R/J}$ are the $V_{f, c}$-Salem numbers of $R$ and $S$, then
    $$C_R\geq C_S|J|^\frac{d-1}{2}.$$
\end{lemma}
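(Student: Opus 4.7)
The plan is to produce an explicit character of $R^d$ on which $\widehat{V_{f,c}(R)}$ is large, by pulling back a worst-case character of $S^d$ through the quotient map $\pi\colon R \to S = R/J$. First I would check that $\pi$ (applied coordinatewise) induces a surjection $V_{f,c}(R) \twoheadrightarrow V_{f,c}(S)$ with every fiber of size exactly $|J|^{d-1}$: given $(\bar x_1, \dots, \bar x_d)\in V_{f,c}(S)$, each of the first $d-1$ coordinates admits $|J|$ independent lifts to $R$, and for any such choice the equation forces $x_d = f(x_1,\dots,x_{d-1})+c$, which automatically projects onto $\bar x_d$ because $f$ has integer coefficients and so commutes with $\pi$.

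Next I would pass to the Fourier side. If $\bar m\in (S^d)^\vee$ is nonzero and $m\in (R^d)^\vee$ is its pullback, then $\chi_m = \chi_{\bar m}\circ\pi$ is constant on the fibers of $\pi$, so partitioning the defining sum of $\widehat{V_{f,c}(R)}(m)$ over those fibers gives
\begin{equation*}
\widehat{V_{f,c}(R)}(m)
= \frac{|J|^{d-1}}{|R|^d}\sum_{\bar x\in V_{f,c}(S)}\chi_{\bar m}(\bar x)
= \frac{|J|^{d-1}|S|^d}{|R|^d}\,\widehat{V_{f,c}(S)}(\bar m)
= \frac{1}{|J|}\,\widehat{V_{f,c}(S)}(\bar m),
\end{equation*}
using $|R| = |J|\cdot|S|$. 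Note that $m\neq 0$ because $\pi$ is surjective on the dual side.

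To finish, I would pick $\bar m\neq 0$ attaining the sharp Salem bound for $S$, so that by lemma \ref{parabolaPointCount} we have $|\widehat{V_{f,c}(S)}(\bar m)| = C_S|S|^{-d}|V_{f,c}(S)|^{1/2} = C_S|S|^{-(d+1)/2}$. Combining this with the displayed identity and the Salem bound $|\widehat{V_{f,c}(R)}(m)|\leq C_R|R|^{-(d+1)/2}$ applied at the pullback $m$ yields
\begin{equation*}
\frac{C_S}{|J|}|S|^{-(d+1)/2} \;=\; |\widehat{V_{f,c}(R)}(m)| \;\leq\; C_R|R|^{-(d+1)/2}.
\end{equation*}
Using $|R|/|S| = |J|$ to clear the $|S|$ factors rearranges this to $C_R\geq C_S|J|^{(d+1)/2 - 1} = C_S|J|^{(d-1)/2}$, as required. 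The only step that demands any care is the uniform fiber count, but since $V_{f,c}$ is a graph over the first $d-1$ coordinates there is no lifting obstruction — this is bookkeeping rather than a genuine obstacle — and the rest of the argument is a direct character-theoretic transfer.
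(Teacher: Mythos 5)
Your proposal is correct and follows essentially the same strategy as the paper: pull back a nontrivial character of $(S^d)^\vee$ along the quotient $\pi\colon R\to S$, count fibers of the induced surjection $V_{f,c}(R)\twoheadrightarrow V_{f,c}(S)$ (each of size $|J|^{d-1}$) to relate the two Fourier transforms, and then compare the resulting identity $\widehat{V_{f,c}(R)}(m)=\tfrac{1}{|J|}\widehat{V_{f,c}(S)}(\bar m)$ at a maximizing $\bar m$ against the Salem bound for $R$. The only cosmetic slip is the remark that ``$\pi$ is surjective on the dual side''; you mean that the pullback map on characters is \emph{injective} (because $\pi$ is surjective), which is what guarantees $m\neq 0$.
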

\begin{proof}
    We follow the strategy of section 2.8 of \cite{IoMPak}. Let $\psi,$ be a nontrivial additive character of $S^{d}$. We may pull it back under the quotient map $\pi: R\rightarrow S$ to an additive character of $R$ which is equal to $1$ on $J^{d}$ (by abuse of notation we identify the resulting character of $R^d$ with that of $S^d$). For this character, we have that:
    \[\begin{split}
        \widehat{V_{f, c}(R)}(\psi) &= \frac{1}{|R|^d}\Slim{\overrightarrow{x}\in V_{f,c}(R)}\psi(\overrightarrow{x})\\
        &= \frac{1}{|R|^d}\Slim{(x_1, \dots, x_{d-1})\in R^{d-1}}\psi(x_1, x_2\dots x_{d-1}, f(x_1, x_2, \dots, x_{d-1}) + c)\\
        &=\frac{|J|^{d-1}}{|R|^d}\Slim{(x_1, \dots, x_{d-1})\in S^{d-1}}\psi(x_1, x_2\dots x_{d-1}, f(x_1, x_2, \dots, x_{d-1}) + c)\\
        &= \frac{|J|^{d-1}}{|R|^d}\Slim{\overrightarrow{x}\in V_{f,c}(S)}\psi(\overrightarrow{x}),
    \end{split}
    \]
    since the character $\psi$ only depends on the image of $\overrightarrow{x}$ in $S^d$, and since each $(x_1, x_2, \dots, x_d)\in V_{f, c}(S)$ has exactly $|J|^{d-1}$ pre-images in $V_{f, c}(R)$, as we may freely choose which pre-image to take for the first $d-1$ coordinates, and are then left with only one possible choice satisfying the equation for the last coordinate. Taking the maximum over the nontrivial characters of $S^d,$ we get that
    $$\text{max}_{m\in (S^d)^\vee}|\widehat{V_{f,c}(R)}(m)| = \frac{|J|^{d-1}}{|R|^d}C_S|{V_{f,c}(S)}|^\frac{1}{2},$$
    whence
    $$C_R|R|^{-d}|V_{f,c}(R)|^\frac{1}{2} \geq \frac{|J|^{d-1}}{|R|^d}C_S|V_{f,c}(S)|^\frac{1}{2}.$$
    Using $|V_{f,c}(R)| = |R|^{d-1} = |J|^{d-1}|S|^{d-1} = |J|^{d-1}|V_{f,c}(S)|,$ this simplifies to
    $$C_R \geq C_S|J|^{d-1}|J|^{-\frac{d-1}{2}} = C_S|J|^\frac{d-1}{2}$$
    as was to be shown.
\end{proof}
Now, we can finally prove the general case.
\begin{theorem}\label{generalCaseParabola}
    Let $R$ be a finite ring with $V_{f, c}$-Salem number $C$. Then either $R$ is a field, a matrix ring of small dimension relative to $d$, or comes from a finite list of exceptional rings (the list depends on $C$). 
\end{theorem}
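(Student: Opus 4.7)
The plan is to reduce to the semisimple case by peeling off the Jacobson radical $J$ of $R$ and passing to the quotient $S = R/J$. If $J = 0$, then $R = S$ is semisimple and Theorem \ref{semisimpleRings} finishes the argument immediately; so the nontrivial case is $J \neq 0$, i.e.\ $|J| \geq 2$.

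In that case I would first combine Lemma \ref{JacobsonBound} with the universal lower bound $C_S \geq \sqrt{1/2}$ from Lemma \ref{C-bound} to get
$$\sqrt{1/2}\,|J|^{(d-1)/2} \leq C_S |J|^{(d-1)/2} \leq C_R \leq C.$$
This bounds $|J|$ by an explicit constant $M = M(C, d)$. The same chain of inequalities bounds $C_S \leq C/|J|^{(d-1)/2} \leq C/2^{(d-1)/2}$, so Theorem \ref{semisimpleRings} now applies to $S$ with a controlled Salem constant: $S$ is a field, a matrix ring of small dimension relative to $d$, or from a finite list depending on $C$.

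If $S$ is from the finite list then $|S|$ is already bounded, so $|R| = |S|\cdot|J|$ is bounded and $R$ lies in a finite list. The main obstacle is the remaining case, where $S = M_n(F)$ is simple with $n \leq 4$ (the field case being $n = 1$): here $|F|$, and hence $|S|$, is a priori unbounded, so we cannot yet conclude, and we must exploit the internal ring structure of $R$ to force $|S|$ down in terms of $|J|$.

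To close this gap I would use that $J/J^2 \neq 0$ (since $J$ is nilpotent, $J = J^2$ would give $J = 0$), and that $J/J^2$ inherits a nonzero $S$-bimodule structure from the two-sided $R$-ideal structure on $J$ because $J\cdot(J/J^2) = (J/J^2)\cdot J = 0$. As a nonzero left $M_n(F)$-module, $J/J^2$ is a direct sum of copies of the unique simple left module $F^n$, so $|J| \geq |J/J^2| \geq |F|^n$. Hence $|F| \leq |J|^{1/n}$ and $|S| = |F|^{n^2} \leq |J|^n \leq M^n \leq M^4$, so $|R| = |J|\cdot|S| \leq M^5$ is bounded and $R$ again comes from a finite list, completing the proof.
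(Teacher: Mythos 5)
Your proof is correct and follows essentially the same route as the paper: bound $|J|$ via Lemma \ref{JacobsonBound} together with the lower bound on $C_S$ from Lemma \ref{C-bound}, apply Theorem \ref{semisimpleRings} to $S = R/J$, and then close the remaining case $S = M_n(F)$ by using that $J/J^2 \neq 0$ carries an $S$-module structure to force $|F|$ to be bounded in terms of $|J|$. The only cosmetic differences are that you justify $J/J^2 \neq 0$ by nilpotence of $J$ where the paper cites Nakayama, and you exploit the full $M_n(F)$-module structure (giving $|F|^n \leq |J|$) where the paper only uses the induced $F$-vector-space structure (giving $|F|$ divides $|J|$); both suffice.
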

\begin{proof}
    For $J = 0,$ this is theorem \ref{semisimpleRings}. Suppose then that $J\neq 0$. From lemma \ref{C-bound} and lemma \ref{JacobsonBound}, $|J|\leq (2C^2)^\frac{1}{d-1},$ and $C_{R/J}\leq C.$ As $R/J$ is semisimple, theorem \ref{semisimpleRings} shows that save for in a finite number of cases, $R/J$ is a finite field or a small matrix ring. As $J$ is bounded, this leads to a finite list of possible ring sizes where $R/J$ is \textbf{not} a finite field or a matrix ring of small dimension relative to $d$, and as a finite set only has finitely many rings structures, this gives only finitely many possible rings. Thus, we must merely show that if we have the given Salem bound and an exact sequence
    $$0\rightarrow J\rightarrow R\rightarrow M_{n\times n}(F) \rightarrow 0,$$
    where $F$ is a finite field and $n$ is fixed, then $J$ must be 0 in all but finitely many cases.\\
    If $J\neq 0$, then $J/J^2\neq 0$ by Nakayama's lemma (\cite{Rotman}, Corollary C-2.8). $J/J^2$ is an $R/J = M_{n\times n}(F)$-module, and (composing the $R$-action with the ring morphism $F\rightarrow M_{n\times n}(F)$ by $x\mapsto xI$ where $I$ is the $n\times n$ identity matrix) an $F$-vector space. Thus $|F|$ divides $|J|$. As $|J|$ is bounded, so is $|F|$. As $n$ is fixed, this means there are only finitely many possible choices of $R/J.$
\end{proof}

\section{Parabaloids}
In the case of the paraboloid $X_d = X_1^2 + \dots + X_{d-1}^2 + c$, we can eliminate the matrix rings by direct computations.  Let $p(x_1\dots, x_{d-1}) = x_1^2 + \dots + x_{d-1}^2.$ We start for $d = 2;$ it will be seen that this family of cases is where all the essential work really is. We start with a lemma about some relevant character sums. Recall that any irreducible character of a finite field $F$ having characteristic $p$ is a map of the form $$\psi_b: x\mapsto \exp(2\pi i Tr(bx)/p),$$ where $Tr$ is the absolute trace map $F\rightarrow F_p$ and $b\in F$ (see for instance \cite{LidlNied}, Chapter 5, Theorem 5.7).
\begin{lemma}\label{gaussSums}
    Let $F$ be a finite field of characteristic $\neq$ 2. Then the sum
    $$\Slim{a\in F}\psi(-a-a^2)$$
    has modulus $|F|^{1/2},$ where $\psi$ is a nontrivial irreducible character of $F$. If $F$ has characteristic 2, and we choose $\psi = \psi_1$, then the modulus of this sum is simply $|F|.$
\end{lemma}
\begin{proof}
    Suppose $F$ is a field of characteristic not equal to 2. Then we may reduce to the standard quadratic Gauss sum by completing the square: let $x$ be the multiplicative inverse of 2. Then (replacing $a$ with $a-x$ in the sum) we have
    \[
    \begin{split}
        \Slim{a}\psi(-a^2 - a) &= \Slim{a}\psi(-(a-x)^2 - (a-x))\\
        &= \Slim{a}\psi(-a^2 + 2ax - x^2 - a - x)\\
        &= \Slim{a}\psi(-a^2 + a - x^2 - a - x)\\
        &= \Slim{a}\psi(-a^2)\psi(-x^2 - x) \\
        &= \psi(-x^2 - x)\Slim{a}\psi(-a^2)
    \end{split}
    \]
    As $F$ is finite, characters have modulus 1, so $\Abs{\psi(-x^2 - x)\Slim{a}\psi(-a^2)} = \Abs{\Slim{a}\psi(-a^2)},$ and we need only worry about (a close cousin of) the classical quadratic Gauss sum. Now,
    \[
    \begin{split}
        \Abs{\Slim{a}\psi(-a^2)}^2 &= \Slim{a, b}\psi(-a^2)\overline{\psi(-b^2)}\\
        &= \Slim{a, b}\psi(b^2 - a^2)\\
        &= \Slim{a, b}\psi((b-a)(b+a))\\
        &= \Slim{u, v}\psi(uv)\\
        &= \Slim{u}\Slim{v}\psi(uv)
    \end{split}
    \]
    where we have made the substitution $u = b-a,$ $v = b+a.$ Now, in the inner sum, if $u\neq 0$, the sum is 0 by Schur orthogonality, and if $u = 0$, each term is simply 1, so the sum evaluates to $|F|.$ Thus, $\Abs{\Slim{a}\psi(-a^2)}^2 = |F|,$ from which the result follows.\\
    
    Now, suppose the characteristic of the field is equal to 2. We may no longer complete the square as before, but now the map $a\mapsto a^2$ is an automorphism fixing the prime field, so by the additive form of Hilbert's theorem 90 (see \cite{Lang}, VI, Theorem 6.3), coupled with the fact that the characteristic is 2, $Tr(-a^2 - a)= Tr(a- a^2) = 0,$ and so $\psi_1(-a-a^2) = 1$ for all $a$, from which the result follows 
\end{proof}
Now, we begin estimating Fourier coefficients for various small $n$ (at first  with $d=2$) to show that these annoying remaining cases behave as expected.
\begin{lemma}\label{lowDimensionalExceptions}
    Fix $C$. Let $d = 2$. Then the set of $2\times 2$, $3\times 3$, and $4\times 4$ matrix rings for which $V_{p,0}$ (equivalently, $V_{p, c}$ for any $c$) is a $C$-Salem set is finite.
\end{lemma}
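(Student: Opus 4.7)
The plan is, for each $n \in \{2,3,4\}$, to evaluate $\widehat{V_{p,0}}$ at an explicit nontrivial character of $R^2 = M_{n\times n}(F)^2$ and observe that the resulting modulus exceeds the $C$-Salem allowance whenever $|F|$ is sufficiently large. Since only finitely many values of $|F|$ escape each inequality, and $n$ ranges over a finite set, only finitely many matrix rings survive.

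Fix a nontrivial additive character $\psi$ of $F$ and parametrise characters of $R^2$ by $(m_1,m_2) \mapsto \bigl((A,B) \mapsto \psi(\mathrm{tr}(m_1 A + m_2 B))\bigr)$. I will evaluate at $(m_1,m_2) = (E_{11}, E_{11})$, where $E_{11}$ is the elementary matrix with a single $1$ in position $(1,1)$. Using $\mathrm{tr}(E_{11} x) = x_{11}$ and $(x^2)_{11} = x_{11}^2 + \sum_{k=2}^{n} x_{1k} x_{k1}$, the character sum becomes
\[
\sum_{x \in M_{n\times n}(F)} \psi\!\left(x_{11} + x_{11}^2 + \sum_{k=2}^{n} x_{1k} x_{k1}\right),
\]
which factors over the matrix entries: the $(n-1)^2$ entries with both indices $\geq 2$ do not appear and contribute $|F|^{(n-1)^2}$; each pair $(x_{1k}, x_{k1})$ with $k \geq 2$ gives $\sum_{a,b\in F}\psi(ab) = |F|$ by Schur orthogonality; and $x_{11}$ gives the one-variable sum $\sum_a \psi(a+a^2)$.

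By Lemma~\ref{gaussSums}, the final factor has modulus $|F|^{1/2}$ in odd characteristic. In characteristic $2$ the analogous sum has modulus $|F|$ specifically when $\psi = \chi_1$; I may assume this WLOG, since rescaling $\psi$ by a nonzero element of $F$ is absorbed into $(m_1,m_2)$ without changing whether the resulting character is trivial. Accumulating,
\[
|\widehat{V_{p,0}}(m_1,m_2)| \;=\; |F|^{-2n^2} \cdot |F|^{(n-1)^2 + (n-1)} \cdot |F|^{e} \;=\; |F|^{-n^2 - n + e},
\]
where $e = 1/2$ in odd characteristic and $e = 1$ in characteristic $2$. Comparing with the $C$-Salem ceiling $C|R|^{-2}|V_{p,0}|^{1/2} = C|F|^{-3n^2/2}$ simplifies to $|F|^{n^2/2 - n + e} \leq C$.

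For each pair $(n,e)$ with $n \in \{2,3,4\}$ and $e \in \{1/2, 1\}$ the exponent is strictly positive (the smallest case is $n=2$, $e=1/2$, with exponent $1/2$), so only finitely many $|F|$ can satisfy the inequality, and only finitely many matrix rings of these sizes can therefore be $C$-Salem. The main obstacle is the characteristic-$2$ bookkeeping, where the large Gauss sum only appears for a specific $\psi$; one must verify that this forcing configuration corresponds to a bona fide nontrivial character of $R^2$, which is precisely the point of the WLOG reduction to $\psi = \chi_1$ by rescaling $(m_1, m_2)$.
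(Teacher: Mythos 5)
Your argument is correct and is essentially the paper's own: both evaluate the Fourier transform at the character determined by $(E_{11},E_{11})$, factor the sum over the matrix entries into a one-variable Gauss sum (Lemma~\ref{gaussSums}) times $n-1$ copies of $\sum_{a,b}\psi(ab)=|F|$ times $|F|^{(n-1)^2}$ from the untouched entries, and conclude the normalized transform grows with $|F|$. You simply unify the $n=2,3,4$ cases with the identity $(x^2)_{11}=x_{11}^2+\sum_{k\geq 2}x_{1k}x_{k1}$ rather than writing out each matrix explicitly as the paper does, and you are careful (correctly) about fixing $\psi=\chi_1$ so the characteristic-$2$ Gauss sum has the claimed size.
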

\begin{proof}
    We start with $2\times 2$ matrix rings, that is, with $R = M_{2\times 2}(F).$ We identify $R$ with its Pontryagin dual in such a way that the characters of R are written as $\psi_1(Tr(A\cdot-))$ for various choices of $A$. We compute the Fourier transform of $V_{p, 0}$ evaluated at $A = \begin{pmatrix} 1&0\\0&0\end{pmatrix}$:
\begin{equation}
\begin{split}
\frac{|R|^2}{|V_{p,0}|^\frac{1}{2}}\Abs{\widehat{V_{p,0}}(A, A)} &= \frac{1}{|F|^2}\Abs{\Slim{C\in M_{2\times 2}(F)} \psi_1(Tr(-AC-AC^2))} \\
&= \frac{1}{|F|^2}\Abs{\Slim{a, b, c, d} \psi_1\left(Tr\left(-A\begin{pmatrix} a&b\\c&d\end{pmatrix} - A\begin{pmatrix} a&b\\c&d\end{pmatrix}^2\right)\right)}\\
&=\frac{1}{|F|^2}\Abs{\Slim{a, b, c, d}\psi_1(-a-a^2-bc)}\\
&=\frac{1}{|F|^2}|F|\Abs{\Slim{a}\psi_1(-a-a^2)}\Abs{\Slim{b} \Slim{c}\psi_1(-bc)}\\
&=  \Abs{\Slim{a}\psi_1(-a-a^2)}
\end{split}
\end{equation}

where we used the same trick for computing $\Slim{b}\Slim{c}\psi_1(-bc)$ as in the proof of lemma \ref{gaussSums}. By lemma \ref{gaussSums}, this quantity grows as $|F|$ grows, so only finitely many $2\times 2$ matrix rings can achieve any fixed Salem-type bound.
Next we tackle $3\times 3$ matrices. We let $A = \begin{pmatrix} 1&0&0\\0&0&0\\0&0&0\end{pmatrix}$, write 
$$C = \begin{pmatrix}a&b&c\\d&e&f\\g&h&i\end{pmatrix},$$
$$AC^2 = \begin{pmatrix}a^2 + bd + cg& 0 & 0\\0 & 0 & 0\\0 & 0 & 0\end{pmatrix},$$
and compute:
\[
\begin{split}
\frac{|R|^2}{|V_{p, 0}|^\frac{1}{2}}\Abs{\widehat{V_{p, 0}}(A, A)} &= \frac{1}{|F|^\frac{9}{2}}\Abs{\Slim{C\in M_{3\times 3}(F)} \psi_1(Tr(-AC-AC^2)) }\\
&= \frac{1}{|F|^\frac{9}{2}}\Abs{\Slim{e, f, h, i}\Slim{a, b, c, d, g}\psi_1(-a-a^2- bd - cg)}\\
&= \frac{|F|^4}{|F|^\frac{9}{2}}\Abs{\Slim{a}\psi_1(-a-a^2)}\Abs{\Slim{b, d}\psi_1(-bd)}\Abs{\Slim{c, g}\psi_1(-cg)}\\
&= \frac{|F|^6}{|F|^\frac{9}{2}} \Abs{\Slim{a}\psi_1(-a-a^2)}\\
&= |F|^\frac{3}{2} \Slim{a}\psi_1(-a-a^2),
\end{split}
\]
which grows as $|F|$ grows. 
Finally, we handle the case of $n = 4$. Let
$$A = \begin{pmatrix}1&0&0&0\\0&0&0&0\\0&0&0&0\\0&0&0&0\end{pmatrix},\text{ and}$$
$$C = \begin{pmatrix}a&b&c&d\\e&f&g&h\\i&j&k&l\\m&n&o&p\end{pmatrix}.$$
Then $Tr(-AC-AC^2)=-a-a^2-be-ci-dm,$ and so the relevant computation becomes:
\[
\begin{split}
\frac{|R|^2}{|V_{p, 0}|^\frac{1}{2}}\Abs{\widehat{V_{p, 0}}(A, A)} &= \frac{1}{|F|^8}\Abs{\Slim{C\in M_{4\times 4}(F)} \psi_1(Tr(-AC-AC^2))} \\
&= \frac{|F|^9}{|F|^8}\Abs{\Slim{b, e}\psi_1(-be)}\Abs{\Slim{c, i}\psi_1(-ci)}\Abs{\Slim{d, m}\psi_1(-dm)}\Abs{\Slim{a}\psi_1(-a-a^2)}\\
&= |F|^4\Abs{\Slim{a}\psi_1(-a-a^2)}.
\end{split}
\] 
Which most definitely grows as $|F|$ grows. Having now handled all the cases, we have proven the lemma.
\end{proof}
At this point, we now know that the standard parabola is $C$-Salem for only finitely many non-field simple rings. Now we address the situation when $d>2.$
\begin{lemma}\label{higherDimensionalExceptions}
Let $d > 2$. The $d$-dimensional paraboloid is $C$-Salem for only finitely many non-field simple rings.
\end{lemma}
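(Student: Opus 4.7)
The plan is to extend the calculation of Lemma \ref{lowDimensionalExceptions} to higher $d$, using the reduction afforded by Theorem \ref{mostCases}. That theorem tells us that, outside of finitely many exceptions depending on $C$, a non-field simple ring on which $V_{p,0}$ is $C$-Salem must be a matrix ring $M_{n\times n}(F)$ of small dimension relative to $d$. For $d > 2$ this means $n = 2$ (if $d \geq 4$) or $n \in \{2, 3\}$ (if $d = 3$). So I need to show that for each such fixed $n \geq 2$, only finitely many choices of $F$ can leave $V_{p,0}$ $C$-Salem over $M_{n \times n}(F)$.

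First I would compute the Fourier transform at the symmetric character corresponding to $A_1 = A_2 = \cdots = A_d = A$, where $A = E_{11}$ is the $(1,1)$ matrix unit. Substituting the constraint $x_d = x_1^2 + \cdots + x_{d-1}^2$ into $\chi_1(\sum_i Tr(A_i x_i))$ factorizes the Fourier sum over the coordinates $x_1, \dots, x_{d-1}$, giving
$$\widehat{V_{p,0}}(A, \dots, A) = \frac{1}{|R|^d}\left(\sum_{C \in M_{n \times n}(F)} \chi_1(Tr(-AC - AC^2))\right)^{d-1}.$$
The inner sum is exactly the single-variable sum that was computed for $n = 2$ in Lemma \ref{lowDimensionalExceptions}, and the same bookkeeping works for general $n$: since $Tr(AC) = c_{11}$ and $Tr(AC^2) = c_{11}^2 + \sum_{k \geq 2} c_{1k} c_{k1}$, the $(n-1)^2$ entries $c_{ij}$ with $i,j \neq 1$ contribute $|F|^{(n-1)^2}$, each of the $n-1$ pairs $(c_{1k}, c_{k1})$ contributes a factor $|F|$ by Schur orthogonality, and the $c_{11}$ sum is $\sum_a \chi_1(-a - a^2)$, controlled by Lemma \ref{gaussSums}. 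So the inner sum has modulus $|F|^{n^2 - n + 1/2}$ in odd characteristic and $|F|^{n^2 - n + 1}$ in characteristic $2$.

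Normalizing, I obtain
$$\frac{|R|^d}{|V_{p,0}|^{1/2}}\left|\widehat{V_{p,0}}(A, \dots, A)\right| = |F|^{(d-1)(n-1)^2/2}$$
in odd characteristic, and $|F|^{(d-1)((n-1)^2+1)/2}$ in characteristic $2$. Both exponents are strictly positive for $n \geq 2$ and $d \geq 3$, so the normalized Fourier coefficient grows unboundedly with $|F|$, forcing $|F|$ to be bounded in terms of $C$ and leaving only finitely many rings for each $n$. The main subtlety worth flagging is that the ``diagonal'' character choice $A_1 = A_d = E_{11}$, $A_2 = \cdots = A_{d-1} = 0$ - the most direct generalization of the $d=2$ argument - breaks in characteristic $2$, since the intermediate factors become $\sum_C \chi_1(-Tr(AC^2))$ whose $c_{11}$-piece is $\sum_a \chi_1(-a^2) = 0$ (Frobenius makes $a \mapsto a^2$ a bijection, collapsing the sum to a nontrivial character sum over $F$). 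Spreading the character symmetrically across all $d$ coordinates restores a linear term in each factor, replacing the vanishing sum by $\sum_a \chi_1(-a-a^2)$, whose full $|F|$-modulus in characteristic $2$ is supplied by the Hilbert 90 half of Lemma \ref{gaussSums}.
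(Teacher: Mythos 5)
Your proposal is correct and follows the paper's method: both reduce via Theorem \ref{mostCases}, evaluate $\widehat{V_{p,0}}$ at the symmetric character $(E_{11},\dots,E_{11})$, factor the sum over $x_1,\dots,x_{d-1}$, and invoke Lemma \ref{gaussSums}. You streamline the bookkeeping by giving the inner sum a unified modulus formula $|F|^{n^2-n+1/2}$ (odd characteristic) for all $n\geq 2$, where the paper reuses its case-by-case $n=2,3,4$ computations from Lemma \ref{lowDimensionalExceptions}; your remark on why the non-symmetric character choice $(E_{11},0,\dots,0,E_{11})$ fails in characteristic $2$ is a correct and useful observation not spelled out in the paper, though the paper implicitly avoids the issue by making the same symmetric choice.
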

\begin{proof}
Fix $d>2$. By Theorem \ref{mostCases}, we only need to consider $n\times n$ matrix rings for $n = 2, 3$. First we consider the case of $n = 2$. Let $A$ be as in the proof of lemma \ref{lowDimensionalExceptions} for $n = 2$. We have that:
\[
\begin{split}
\frac{|R|^d}{|V_{p, 0}|^\frac{1}{2}}\Abs{\widehat{V_{p, 0}}(A, A,\dots, A)} &= \frac{1}{|F|^{\frac{d-1}{2}n^2}}\Abs{\Slim{C_1\dots C_{d-1}\in M_{2\times 2}(F)} \psi_1(Tr(-\sum\limits_{i = 1}^{d-1}(AC_i+AC_i^2)))} \\
&= \frac{1}{|F|^{2d-2}}\prod\limits_{k = 1}^{d-1}\Abs{(\Slim{C\in M_{2\times 2}(F)} \psi_1(Tr(-AC-AC^2))}\\
&\geq \frac{1}{|F|^{2d-2}}\prod\limits_{k = 1}^{d-1}\left(|F|^2|F|^\frac{1}{2}\right)\\
&= |F|^\frac{d-1}{2},
\end{split}
\]
which grows as needed, where we used the computations from Lemma \ref{lowDimensionalExceptions} to go from the second line to the third line.\\

We may also tackle the case of $n = 3$ similarly, simplifying notation as here we have $d = 3$ by lemma \ref{mostCases}. Let $A$ be as in the proof of lemma \ref{lowDimensionalExceptions}. We compute:
\[
\begin{split}
\frac{|R|^3}{|V_{p, 0}|^\frac{1}{2}}\Abs{\widehat{V_{p, 0}}(A, A, A)} &= \frac{1}{|F|^9}\Abs{\Slim{C_1, C_2\in M_{3\times 3}(F)} \psi_1(Tr(-AC_1-AC_2-AC_1^2-AC_2^2))} \\
&= \frac{1}{|F|^{9}}\prod\limits_{k = 1}^{2}\Abs{\Slim{C\in M_{3\times 3}(F)} \psi_1(Tr(-AC-AC^2))}\\
&\geq \frac{1}{|F|^{9}}\prod\limits_{k = 1}^{2}\left(|F|^6|F|^\frac{1}{2}\right)\\
&= |F|^4,
\end{split}
\] 
giving us the needed growth.
\end{proof}
Putting this all together, we may state the following theorem:
\begin{theorem}\label{simpleRings}
    Let $R$ be a finite simple ring (so it is the ring of $n\times n$ matrices over a finite field $F$). Suppose that the $d$-dimensional paraboloid $V_{p, c}$ is a $C$-Salem set. Then $R$ is either a finite field or comes from a finite list of exceptional rings.
\end{theorem}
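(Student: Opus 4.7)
The plan is to combine the general structural result for simple rings, Theorem \ref{mostCases}, with the explicit Fourier computations performed in Lemmas \ref{lowDimensionalExceptions} and \ref{higherDimensionalExceptions}. Theorem \ref{mostCases} already narrows the possibilities: a simple ring $R$ on which $V_{p,c}$ is $C$-Salem must be a finite field, a matrix ring of small dimension relative to $d$ (meaning $n=2$ always, $n=3$ when $d\in\{2,3\}$, and $n=4$ when $d=2$), or belong to a finite exceptional list. So the task reduces to eliminating the ``small dimension'' matrix rings from the list of possibilities that persist over fields $F$ of arbitrarily large size.

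First I would split on $d$. For $d=2$, invoke Lemma \ref{lowDimensionalExceptions} directly: it produces, for each of $n=2,3,4$, a specific character (induced by the rank-one matrix $A$ with a single $1$ in the top-left corner) whose normalized Fourier coefficient $|R|^{d}|V_{p,0}|^{-1/2}|\widehat{V_{p,0}}(A,A)|$ is bounded below by $|F|^{1/2}$, $|F|^{3/2}$, or $|F|^4$, respectively, using the Gauss-sum estimate of Lemma \ref{gaussSums}. Since this lower bound grows without bound in $|F|$, only finitely many fields $F$ can admit the $C$-Salem property.

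For $d>2$, the remaining cases from the definition of small dimension are $n=2$ (all $d>2$) and $n=3$ (only $d=3$). Lemma \ref{higherDimensionalExceptions} handles both: evaluating $\widehat{V_{p,0}}$ at the tuple $(A,A,\dots,A)$ factorizes into a product of $d-1$ copies of the single-variable computation from Lemma \ref{lowDimensionalExceptions}, yielding lower bounds of $|F|^{(d-1)/2}$ for $n=2$ and $|F|^4$ for $n=3$. Again, these grow with $|F|$, so the $C$-Salem bound forces $|F|$ to be bounded, leaving only finitely many rings of each such type.

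Assembling these ingredients is routine: given $C$, Theorem \ref{mostCases} contributes a finite exceptional list; Lemmas \ref{lowDimensionalExceptions} and \ref{higherDimensionalExceptions} convert each remaining small-dimensional case into another finite list; taking the union over the (finitely many) pairs $(n,d)$ to be eliminated yields the conclusion that $R$ is a finite field or lies in a finite exceptional list. The only subtle point to double-check is that the definition of ``small dimension relative to $d$'' is completely covered by the two lemmas: $d=2$ needs $n\in\{2,3,4\}$ (Lemma \ref{lowDimensionalExceptions}), $d=3$ needs $n\in\{2,3\}$ (Lemma \ref{higherDimensionalExceptions}), and $d\geq 4$ needs only $n=2$ (again Lemma \ref{higherDimensionalExceptions}); these exhaust the list, so there is no hidden case. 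The main ``work'' of the theorem has therefore already been done in the two preceding lemmas, and the theorem itself is a packaging step.
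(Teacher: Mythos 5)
Your proposal is correct and follows the same route as the paper: invoke Theorem \ref{mostCases} to reduce to the small-dimension matrix cases, then dispatch those via Lemma \ref{lowDimensionalExceptions} ($d=2$, $n\in\{2,3,4\}$) and Lemma \ref{higherDimensionalExceptions} ($d=3$, $n\in\{2,3\}$; $d\ge 4$, $n=2$). The case-splitting and the observation that the two lemmas exhaust the ``small dimension'' list match the paper's own argument.
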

\begin{proof}
    For $d\geq 4,$ $n>2$, $d = 3,$ $n>3,$ and $d = 2,$ $n>4$ this is lemma \ref{mostCases}. For $d = 2,$ $n = 2, 3, \text{or } 4,$ this is lemma \ref{lowDimensionalExceptions}. Finally, for $d = 3,$ $n = 2\text{ or } 3$ and for $d > 3$, $n = 2$ this is lemma \ref{higherDimensionalExceptions}.
\end{proof}

 \section{Acknowledgments}
The author wishes to express gratitude to Prof. Alex Iosevich and to Prof. Azita Mayeli for introducing him to this problem and for their support in completing this paper. The author completed this work while partially supported by a Graduate Center Fellowship at the CUNY Graduate Center.

\end{document}